\newtheorem{thm}{Theorem}
\newtheorem{lem}[thm]{Lemma}
\newtheorem{prop}[thm]{Proposition}
\theoremstyle{definition}
\newtheorem{defn}[thm]{Definition}
\theoremstyle{remark}
\newtheorem*{rem*}{\textbf{Remark}}
\numberwithin{subcase}{case}
\def\BAij{\mathbf{Bad}(i,j)}
\def\BAcij{\mathbf{Bad}_c(i,j)}
\def\BA{\mathbf{Bad}}
\def\BAi{\mathbf{Bad}(i)}
\def\C {\mathbf{C}}
\def\F {\mathbf{F}}
\def\R {\mathbb{R}}
\def\Z {\mathbb{Z}}
\def\N{\mathbb{N}}
\def\NEFES{\mathbb{N}\cup \{0\}}
\def\dl{\Delta(L)}
\def\jn{\mathcal{J}_n}
\def\j {\mathcal{J}}
\def\i {\mathcal{I}}
\def\m {\mathcal{M}}
\def\mn {\mathcal{M}_n}
\def\l {\mathcal{L}}
\def\t {\mathcal{T}}
\def\tn {\mathcal{T}_n}
\def\supp {{\rm supp}}
\begin{document}
\title{Badly approximable vectors on a vertical Cantor set}
\author{Erez Nesharim}
\date{March, 2013}
\maketitle

\begin{abstract}
For $i, j > 0, i + j = 1$, the set of badly approximable vectors with weight $(i, j)$ is defined by $\BA(i, j) = \{(x, y) \in \R^2 : \exists c > 0 \;\forall q\in\N, \;\; \max\{q||qx||^{1/i} , q||qy||^{1/j} \} > c\}$, where $||x||$ is the distance from $x$ to the nearest integer. In 2010 Badziahin-Pollington-Velani solved Schmidt's conjecture which was stated in 1982, proving that $\BA(i, j) \cap \BA(j, i)$ is nonempty. Using Badziahin-Pollington-Velani's technique with reference to fractal sets, we were able to improve their results:
Assume that we are given a sequence $\left(i_t, j_t\right)$ with $i_t , j_t > 0, i_t + j_t = 1$. Then, the intersection of $\BA\left(i_t , j_t\right)$ over all t is nonempty.
\end{abstract}

\section{Introduction}\label{intro}
Let $i,j$ be such that
\begin{equation}\label{ij}
i,j\in[0,1],\;\;i+j=1.
\end{equation}
\begin{defn}[Badly approximable vectors with weights $(i,j)$]\label{baij}

\begin{equation}
\resizebox{.92\textwidth}{!}{$\displaystyle
\BAij=\left\{(x,y)\in\R^2:\exists c>0\;\;\forall p_1,p_2\in\Z,q\in\N \;\; \max\left\{q|qx-p_1|^\frac{1}{i},q|qy-p_2|^\frac{1}{j}>c\right\}\right\},
$}
\end{equation}
and we agree that $\BA(1,0)=\mathbf{BA}\times\R$ and $\BA(0,1)=\R\times\mathbf{BA}$, where $\mathbf{BA}$ is the classical set of badly approximable numbers.
\end{defn}
\noindent
Schmidt's conjecture was concerned with the intersection between two different $\BAij$'s. It was proved by Badziahin-Pollington-Velani in \cite{Badziahin}. Actually, they proved
\begin{thm}\label{BPV1}
Let $\{\left(i_t,j_t\right)\}_{t\in\N}$ be as in (\ref{ij}). Assume
\begin{equation}\label{badCondition}
\liminf_t \min\{i_t,j_t\}>0.
\end{equation}
Then
$$\dim\left(\bigcap_{t=1}^\infty \BA\left(i_t,j_t\right)\right)=2.$$
\end{thm}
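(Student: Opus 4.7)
The plan is to construct, inductively on $n$, a nested family of collections $\mathcal{R}_n$ of closed axis-parallel rectangles in $[0,1]^2$ with $\mathcal{R}_{n+1}$ refining $\mathcal{R}_n$, such that the resulting Cantor set $K=\bigcap_n\bigcup\mathcal{R}_n$ is contained in $\bigcap_t\BA(i_t,j_t)$ and carries a tree measure of Hausdorff dimension arbitrarily close to $2$. Fix an integer parameter $R\gg 1$ and, for each $t$, a small constant $c_t>0$; these parameters will be tuned later.

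First I reduce via hypothesis (\ref{badCondition}) to the case that $(i_t,j_t)\in[\delta,1-\delta]^2$ for a fixed $\delta>0$ and all $t$, after discarding finitely many exceptional indices. This uniform distance from the boundary of the open simplex lets every counting constant below be chosen uniformly in $t$. The inductive step $\mathcal{R}_{n-1}\rightsquigarrow\mathcal{R}_n$ is as follows: each $P\in\mathcal{R}_{n-1}$ is subdivided into a grid of child rectangles whose scale is a common refinement of the $(i_t,j_t)$-adapted scales $R^{-ni_t}\times R^{-nj_t}$ for $t\le n$; a child is then discarded if it meets a ``dangerous strip'' associated to some rational $(p_1/q,p_2/q)$ with $R^{n-1}\le q<R^n$ and to the admissible constant $c_t$. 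The geometric lemma at the heart of \cite{Badziahin} bounds the fraction of children of $P$ killed by each single $t$ at level $n$ by a quantity $\kappa(c_t,R)$ that tends to zero as $c_t\to 0$ with $R$ fixed.

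The main obstacle is the simultaneous handling of infinitely many weights: the number of restrictions imposed at level $n$ grows with $n$, so the killed fractions must sum to a quantity strictly less than $1$ across all $t\le n$ uniformly in $n$, and moreover the differently-shaped $(i_t,j_t)$-rectangles must all be accommodated within a single refinement grid. I would handle this by choosing the $c_t$ to decay fast enough in $t$ that $\sum_{t=1}^\infty\kappa(c_t,R)$ is as small as desired. Then at every level a proportion of children arbitrarily close to $1$ survives, so $K$ is non-empty and the resulting Cantor tree supports a measure whose Hausdorff dimension can be pushed to $2$ by letting $R\to\infty$ through a diagonal construction, via the standard mass distribution estimate used in \cite{Badziahin}. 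The upper bound $\dim_H\le 2$ is automatic since the set lies in $\R^2$, which completes the proof.
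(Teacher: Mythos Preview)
First, a framing remark: this theorem is not proved in the present paper. It is quoted from \cite{Badziahin}, and the paper's own work (Theorems~\ref{dim} and~\ref{ourThm}) proceeds on a single vertical fibre $\Theta=\{\theta\}\times[0,1]$; the two-dimensional statement is recovered in \cite{Badziahin} by fibring over $\theta\in\BAi$ and applying a Marstrand-type slicing argument, not by a direct construction with axis-parallel rectangles.

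Your sketch has a genuine gap at its central step. You invoke ``the geometric lemma at the heart of \cite{Badziahin}'' asserting that the fraction of children of a cell $P$ killed at level $n$ by the weight $(i_t,j_t)$ is bounded by some $\kappa(c_t,R)$ with $\kappa(c_t,R)\to 0$ as $c_t\to 0$. No such single-level bound is proved in \cite{Badziahin}, and indeed the whole difficulty of Schmidt's conjecture is that it fails in this naive form. Shrinking $c_t$ makes each dangerous strip $\Delta(L)$ thinner, but it does nothing to control \emph{how many} lines $L$ with $H(A,B)\in[R^{n-1},R^n)$ meet a given cell; that count is independent of $c_t$ and can be large. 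The actual mechanism in \cite{Badziahin}, reproduced here as Theorem~\ref{main_thm} and Lemma~\ref{ubique}, is to split the dangerous lines into classes $C(n,\ell)$ and charge the removals in each class not to the parent at level $n$ but to an ancestor at level $n-\ell$; the resulting bound (\ref{main_thm_eq}) is $R^{\beta-\varepsilon}$, which depends on $R$ and not on $c$, and one then needs the ubiquity/subtree extraction of Lemma~\ref{ubique} and Theorem~\ref{rubiquity} to produce a regular tree. Your outline suppresses exactly this multi-level accounting, which is where all the work lies.

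There is a second, more structural problem. The rectangles adapted to $(i_t,j_t)$ at level $n$ have aspect ratio $R^{-ni_t}:R^{-nj_t}$, and for distinct $t$ these ratios are generically incommensurable; a ``common refinement grid'' of all of them for $t\le n$ has no uniform description, and you give no indication of how any counting estimate would survive passage to such a refinement. The route taken in \cite{Badziahin} and in this paper avoids this entirely by working on a one-dimensional fibre, where a single scale $c_1R^{-n}$ serves all weights simultaneously.
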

This solves Schmidt's conjecture about simultaneous Diophantine approximations. In fact, to prove this theorem, Badziahin-Pollington-Velani proved a theorem about the intersection of $\BAij$ with certain vertical intervals. To state it, first let us make the following definition:
\begin{defn}[Badly approximable numbers with weight $i$]\label{bai}
Let $0\leq i\in\R$. The set of badly approximable numbers with weight $i$ is
$$\BAi=\left\{x\in\R:\exists c>0\;\forall p\in\Z,q\in\N \;\; q^\frac{1}{i}|qx-p|>c\right\},$$
where we agree on $\BA(0)=\R$.
\end{defn}
Notice that for any $i_1\leq i_2$, $\BA\left(i_2\right)\subseteq\BA\left(i_1\right)$, $\BA(1)=\mathbf{BA} $, and that for $i>1$, $\BAi=\varnothing$.
\begin{thm}[Badziahin-Pollington-Velani]\label{BPV2}
Let $\{\left(i_t,j_t\right)\}_{t\in\N}$ be as in (\ref{ij}). Denote $i=\sup_{t\in\N} i_t$ and assume (\ref{badCondition}). Assume
\begin{equation}\label{theta}
\theta\in\BAi,
\end{equation}
and let
\begin{equation}\label{Theta}
\Theta=\{(\theta,y):y\in[0,1]\}.
\end{equation}
Then,
\begin{equation}\label{5.5}
\dim\left(\bigcap_{t=1}^\infty \BA\left(i_t,j_t\right)\cap\Theta\right)=1.
\end{equation}
\end{thm}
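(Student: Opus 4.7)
The plan is to adapt, to the single vertical line $\Theta$, the Cantor-construction scheme that Badziahin--Pollington--Velani used to prove Theorem~\ref{BPV1}. Fix a small $c>0$ and a large integer $R\geq 2$, both to be chosen. I want to build a sequence $(\j_n)_{n\geq 0}$ of finite collections of closed sub-intervals of $[0,1]$, each $J\in\j_n$ of length $R^{-n}$, with $\bigcup\j_{n+1}\subseteq\bigcup\j_n$, such that, identifying $y\in[0,1]$ with $(\theta,y)\in\Theta$, the limit set $\mathbf{K}_c:=\bigcap_n\bigcup\j_n$ lies in $\bigcap_t\BA(i_t,j_t)$ with the single constant $c$ witnessing every membership, and such that at every stage a uniformly positive proportion of the $R$ children of each $J\in\j_n$ survive into $\j_{n+1}$. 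The latter forces $\dim\mathbf{K}_c=1$ by the standard mass-distribution argument on uniformly branching Cantor sets.

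To identify what must be removed, fix a weight $(i_t,j_t)$ and $q\in\N$. The bad set for $\BA(i_t,j_t)$ at denominator $q$ meets $\Theta$ nontrivially exactly when $q$ is $(\theta,t)$-\emph{resonant}, i.e.\ there is $p_1\in\Z$ with $|q\theta-p_1|\leq c^{i_t}q^{-i_t}$; for such $q$ the bad set on $\Theta$ is a union of intervals of length $2c^{j_t}q^{-1-j_t}$ centered at $p_2/q$, $p_2\in\Z$. Two counting facts then drive the induction. First, the hypothesis $\theta\in\BAi$ gives $|q\theta-p|>c_0 q^{-1/i}$ for some $c_0=c_0(\theta)>0$, and compatibility with resonance forces $c_0 q^{-1/i}<c^{i_t}q^{-i_t}$, hence $q>Q_0(c,t):=(c_0 c^{-i_t})^{1/(1/i-i_t)}$; condition (\ref{badCondition}) bounds the exponents $1/i-i_t$ uniformly away from $0$, so $Q_0(c,t)\to\infty$ as $c\to 0$ uniformly in $t$. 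Second, a standard two-rational pigeonhole applied to the resonance inequality bounds by at most one the number of $(\theta,t)$-resonant $q$ in each dyadic range $[Q,2Q]$ once $Q$ exceeds a fixed threshold.

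With these counts in hand the induction runs along BPV's template. To pass from $\j_n$ to $\j_{n+1}$, subdivide each $J\in\j_n$ into $R$ equal pieces of length $R^{-n-1}$ and discard those meeting a bad interval of the matching scale, i.e.\ coming from resonant $q\sim R^{(n+1)/(1+j_t)}$. By the first counting fact such $q$ appear only for $n$ past some $n_0(c)\to\infty$, so the first $n_0$ stages remove nothing; by the second, at every further scale only one resonant $q$ per dyadic range contributes, with $O(R)$ bad sub-intervals each. The countably many $t$ are processed diagonally, using the uniform lower bound $j_t\geq j_0>0$ from (\ref{badCondition}) to obtain geometric decay of the $t$-tails. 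Choosing $R$ large and then $c$ small then keeps the discarded fraction of children strictly below $1$ at every stage.

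The hard part is exactly this quantitative balancing: for a single $c$, the estimates coming from $\theta\in\BAi$ must dominate the bad contributions of every weight $(i_t,j_t)$ simultaneously. The role of the supremum hypothesis $i=\sup_t i_t$ is precisely to provide the uniform gap $1/i-i_t\geq 1/i-\sup_t i_t>0$ (made positive by (\ref{badCondition})), so that one single estimate $Q_0(c)\to\infty$ suppresses resonances across all weights at once. Once uniform branching is established, distributing unit mass evenly on $\bigcup\j_n$ at each stage produces a Frostman measure whose local dimension tends to $1$, yielding $\dim\mathbf{K}_c=1$ by the mass-distribution principle.
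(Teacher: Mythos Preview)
First, a point of context: the paper does not give its own proof of Theorem~\ref{BPV2}; it is quoted from \cite{Badziahin}. What the paper does prove is the single-weight fractal version (Theorem~\ref{dim}) by adapting BPV's method, non-emptiness of the countable intersection (Theorem~\ref{ourThm}) by iterating Theorem~\ref{dim}, and the full-dimension statement (Theorem~\ref{iOurThm}) via Jinpeng An's absolute-winning result. So the relevant comparison is with BPV's method as reproduced in Section~\ref{mainTheorem}.

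Your approach differs from BPV's at a basic level: you work in the simultaneous formulation, tracking ``resonant'' denominators $q$, whereas BPV and this paper pass to the \emph{dual} linear-forms picture and remove neighbourhoods $\Delta(L)$ of rational lines $L(A,B,C)$, organised into the classes $C(n,\ell,k)$. The heart of BPV's argument is the common-rational-point phenomenon (Proposition~\ref{thm3} here): all lines in $C(n,\ell,\cdot,J)$ pass through one rational point $P$, which together with the triangle-area counting of Proposition~\ref{F} yields the crucial removal bound of Theorem~\ref{main_thm}. Your ``at most one resonant $q$ per dyadic range'' plays a loosely analogous role, and for a \emph{single} weight your scheme can plausibly be made to give $O(\log R)$ removals per level, which suffices for full dimension.

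The gap is in the countable intersection. Your counting yields, for each weight $t$, roughly $\log_2 R$ resonant $q$ at the matching scale, each deleting a bounded number of children; summed over infinitely many $t$ this is unbounded, and nothing in your estimates makes the $t$-contribution decay in $t$. The phrase ``processed diagonally \ldots geometric decay of the $t$-tails'' is precisely the step that is missing, not one that has been carried out. Note too that the matching scale $q\sim R^{(n+1)/(1+j_t)}$ itself depends on $t$, so the same $q$ is ``due'' at different stages for different weights; postponing weight $t$ to stages $n\ge t$ then forces you to delete bad intervals much longer than a child, wiping out $J$. BPV overcome exactly this difficulty through the dual-form machinery above, with the constants in (the analogue of) Theorem~\ref{main_thm} made uniform under~(\ref{badCondition}); without that, your sketch does not close. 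A minor additional slip: ``$1/i-\sup_t i_t>0$, made positive by (\ref{badCondition})'' is not quite right, since the $\liminf$ hypothesis allows finitely many $j_t=0$ and hence possibly $\sup_t i_t=1$; those exceptional $t$ give $\BA(1,0)\cap\Theta=\Theta$ and may be discarded, but this needs to be said.
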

In this paper we strengthen this result in two directions. The first direction is to consider the intersection of $\BAij$ with certain fractals. We will use a measure that is supported on the fractal. See \cite{KW}, \cite{KTV} for more on this subject, and \cite{icm2010} for a broader point of view.
\begin{defn}[Power Law]\label{powerLaw}
Let $X$ be a metric space, $\mu$ a Borel measure. $\mu$ satisfies a \emph{power law} if there are positive $\beta,b_1,b_2$ such that $\forall x\in supp(\mu),0<r<1$,
\begin{equation}\label{powerLawEq}
b_1r^\beta \leq \mu(B(x,r)) \leq b_2r^\beta.
\end{equation}
\end{defn}
\noindent
Using this property we prove
\begin{thm}\label{dim}
Let $i,j\in[0,1]$ be as in (\ref{ij}), $\theta$ as in (\ref{theta}) and $\Theta$ be as in (\ref{Theta}). Assume $\C\subseteq\Theta$ is the support of a probability measure $\mu$ on $\Theta$, which satisfies a power law with exponent $\beta$. Then for any $\beta'<\beta$, there exists a measure $\nu$ satisfying a power law with exponent bigger than $\beta'$, such that
$$\supp(\nu)\subseteq\BAij\cap\C.$$
In particular,
$$\dim(\BAij\cap\C)=\beta.$$
\end{thm}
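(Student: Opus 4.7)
The plan is to imitate the Cantor-type construction of Badziahin--Pollington--Velani from Theorem \ref{BPV2}, carried out intrinsically on $\C$ against the measure $\mu$ in place of Lebesgue measure on $\Theta$. First, fix a large integer $R$ whose value will depend on $i,j,\beta,\beta'$ and on the power-law constants $b_1,b_2$ in (\ref{powerLawEq}). The two-sided estimate (\ref{powerLawEq}) serves as a dictionary: a sub-interval of $\Theta$ of length $\asymp R^{-n}$ meeting $\supp(\mu)$ carries $\mu$-mass $\asymp R^{-n\beta}$, and any parent interval of scale $R^{-(n-1)}$ contains $\asymp R^\beta$ such children. I would then construct inductively a nested sequence of finite families $\j_n$ of closed vertical sub-intervals of $\Theta$, each of length $R^{-n}$ and meeting $\supp(\mu)$, such that every $J\in\j_n$ lies in some parent in $\j_{n-1}$ and stays away from all dangerous rational vectors $(p_1/q,p_2/q)$ whose height $q$ lies in the dyadic band attached to level $n$.

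The core of the argument is the counting/removal step at each level. Of the $\asymp R^\beta$ candidate children of a given $J\in\j_{n-1}$, I need a definite fraction to survive the removal of neighbourhoods of the dangerous rationals in the corresponding height band. The hypothesis $\theta\in\BAi$ enters as in \cite{Badziahin}, bounding how close the first coordinate $p_1/q$ of such a rational can come to $\theta$ and so restricting which heights $q$ are relevant to $J$ in the first place. Combining this with the upper bound $\mu(B(y,r))\leq b_2 r^\beta$, the total $\mu$-mass of danger zones inside $J$ summed over the dyadic band of admissible $q$'s is bounded by a constant (depending only on $i,j,b_1,b_2$) times $R^{-n\beta}$ times a factor that decays in $R$. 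For $R$ large this is strictly less than the available mass $\asymp R^{-(n-1)\beta}$, so at least $cR^\beta$ candidate children survive, with $c=c(R,i,j,\beta,b_1,b_2)\in(0,1)$ that tends to $1$ as $R\to\infty$.

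Given this tree $\bigsqcup_n\j_n$, set $\mathbf{K}=\bigcap_{n\geq 0}\bigcup_{J\in\j_n}J$, which by construction lies in $\BAij\cap\C$, and let $\nu$ be the weak-$*$ limit of the normalised restrictions of $\mu$ to $\bigcup_{J\in\j_n}J$. Uniform branching $\geq cR^\beta$ at each step against the geometric diameter decay $R^{-1}$ gives a power law for $\nu$ with exponent
\[
\beta_R \;=\; \beta + \frac{\log c}{\log R},
\]
and $\beta_R\to\beta$ as $R\to\infty$. Thus for any $\beta'<\beta$ one can choose $R$ so that $\beta_R>\beta'$, yielding the measure $\nu$ promised by the theorem. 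The dimension identity then follows: the mass distribution principle applied to $\nu$ gives $\dim(\BAij\cap\C)\geq\beta_R$ for each such $R$, hence $\geq\beta$ on letting $\beta'\uparrow\beta$, while the matching upper bound is immediate from $\BAij\cap\C\subseteq\C$ and $\dim\C\leq\beta$, a direct consequence of the lower estimate in (\ref{powerLawEq}).

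The principal obstacle I anticipate is transporting the BPV counting lemma from Lebesgue measure to a general power-law measure. In \cite{Badziahin} the counts are sharp length estimates that exploit near-disjointness of the intervals removed for different $q$ within the same dyadic band. With a fractal $\mu$ only the two-sided power law (\ref{powerLawEq}) is available, so one must separate those parts of the BPV bookkeeping that rely on genuine disjointness (which survive for any Borel measure) from those that rely on exact proportionality of length to measure (which must be replaced by the upper estimate in (\ref{powerLawEq})). This is where $b_1,b_2$ enter, and it is the reason the resulting exponent $\beta_R$ falls short of $\beta$ by a term $O(1/\log R)$; the shortfall is harmless since we then let $\beta'\to\beta$.
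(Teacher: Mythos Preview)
Your overall strategy matches the paper's: a BPV-style Cantor construction on $\C$, using the power law (\ref{powerLawEq}) in place of Lebesgue proportionality, followed by a mass-distribution argument to extract a power-law measure $\nu$ with exponent approaching $\beta$ as $R\to\infty$. The dimension conclusion is handled exactly as you describe.

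The genuine gap is in the removal step. You write that ``of the $\asymp R^\beta$ candidate children of a given $J\in\j_{n-1}$, I need a definite fraction to survive'' and later assert ``uniform branching $\geq cR^\beta$ at each step''. This is not what the BPV counting (in its dual, line-based form) actually delivers. The paper decomposes the dangerous lines into classes $C(n,\ell)$ indexed by an additional parameter $\ell$, and the key estimate (Theorem~\ref{main_thm}) says that for $J\in\j_{n-\ell}$, at most $R^{\beta-\varepsilon}$ intervals of $\i_{n+1}(J)$ are killed by lines in $C(n,\ell)$. Thus removals at level $n+1$ are charged not to the immediate parent but to ancestors at \emph{many} earlier levels $n-\ell$, $0\le\ell\le \frac{nj}{\lambda(j+1)}$. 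A single $J\in\j_n$ may therefore lose all of its children, so $\{\j_n\}$ need not be tree-like and no uniform branching bound follows directly.

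The paper repairs this with two extra ingredients you do not mention: an inductive ``ubiquity'' estimate (Lemma~\ref{ubique}) showing that any $\lceil 3R^{\beta-\varepsilon}\rceil$-regular subtree of $\i$ meets $\j_n$ at every level, and a purely combinatorial extraction (Theorem~\ref{rubiquity}) producing from this a regular subtree of degree $\asymp R^\beta$ inside $\j$. Only then does one have the uniform branching needed to invoke the tree-to-measure Lemma~\ref{treeDim}. Your last paragraph correctly flags the translation of BPV's counting to the power-law setting as the main work, but you should be aware that even after that translation the tree may have dead branches, and the ubiquity/extraction machinery is an essential additional layer.
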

\noindent
This result with $\C=\Theta$ is the case of a single $\BAij$ in Theorem~\ref{BPV2}. Badziahin-Pollington-Velani asked whether (\ref{5.5}) is true without assuming (\ref{badCondition}). Our second strengthening of \cite{Badziahin} provides a partial result to this question.
\begin{thm}\label{ourThm}
Let $\C\subseteq\Theta$ be the support of a measure satisfying a power law, and let $\{\left(i_t,j_t\right)\}_{t\in \N}$ with $\left(i_t,j_t\right)$ as in (\ref{ij}). Then
$$\C\cap\bigcap_{t\in\N} \BA\left(i_t,j_t\right)\neq\varnothing.$$
\end{thm}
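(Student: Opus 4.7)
The plan is to apply Theorem~\ref{dim} inductively: at step $t$ we peel off one weight $(i_t,j_t)$ from the current measure, producing a nested sequence of supports of power-law measures, and then extract a common point by compactness.

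Set $\nu_0 := \mu$ and $\beta_0 := \beta$. Assume inductively that $\nu_{t-1}$ is a probability measure on $\Theta$ satisfying a power law with exponent $\beta_{t-1} > 0$ and whose support is contained in $\C \cap \bigcap_{s<t}\BA(i_s,j_s)$. Apply Theorem~\ref{dim} with $\nu_{t-1}$ in the role of $\mu$ (so $\supp(\nu_{t-1})$ plays the role of $\C$), weight $(i_t,j_t)$, and target exponent $\beta_t := \beta_{t-1}/2$. This yields a probability measure $\nu_t$ obeying a power law with exponent exceeding $\beta_t > 0$ whose support is contained in $\BA(i_t,j_t) \cap \supp(\nu_{t-1})$, and the inductive hypothesis propagates.

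The supports $\supp(\nu_t)$ form a decreasing sequence of nonempty closed subsets of the compact set $\Theta$, so by Cantor's intersection theorem their intersection is nonempty, and every point in it lies in $\C \cap \bigcap_{t\in\N}\BA(i_t,j_t)$. The only point requiring care is the hypothesis $\theta \in \BA(i_t)$ of Theorem~\ref{dim} at every step; this is secured by the assumption $\theta \in \BA(\sup_t i_t)$ from (\ref{theta}), together with the monotonicity $\BA(i) \subseteq \BA(i_t)$ for $i_t \le i$ recorded after Definition~\ref{bai}. Unlike Theorem~\ref{BPV1}, no condition analogous to (\ref{badCondition}) is needed here, because the iteration relies only on positivity of the current exponent, which is automatically preserved by the choice $\beta_t = \beta_{t-1}/2 > 0$. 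In this sense, once Theorem~\ref{dim} is available, the only real content of Theorem~\ref{ourThm} is the bookkeeping ensuring the inductive hypotheses pass through — the substantive obstacle has already been resolved in the proof of Theorem~\ref{dim}.
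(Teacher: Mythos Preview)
Your proof is correct and follows essentially the same approach as the paper's: both iterate Theorem~\ref{dim} to produce a nested sequence of supports of power-law measures inside the successive $\BA(i_t,j_t)$ and then conclude by compactness of $\Theta$. The only cosmetic differences are that the paper decreases the exponent by $\varepsilon/2^n$ at step $n$ (so the exponents remain bounded away from zero) whereas you simply halve it, and you are more explicit than the paper about checking the hypothesis $\theta\in\BA(i_t)$ via the monotonicity $\BA(\sup_t i_t)\subseteq\BA(i_t)$.
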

\noindent
Using the techniques of this article one cannot give a result about the dimension of the infinite intersection. Recently, Jinpeng An\cite{JinpengAn} proved that in the case $\C=\Theta$,
$$\BAij\cap\C\neq\varnothing\Rightarrow\;\BAij\cap\C {\rm \; is \; winning},$$
which in particular implies that any countable intersection of such sets is not empty. In \ref{abswin}, which is joint with Barak Weiss, we use Jinpeng An's result and method in order to prove
$$\BAij\cap\C\neq\varnothing\Rightarrow\;\BAij\cap\C {\rm \; is \; winning},$$
which easily gives also a dimension result in the context of Theorem~\ref{ourThm}.

\vspace{8 mm}
The structure of this paper is the following. In Section~\ref{mainTheorem} we prove Theorem~\ref{dim} assuming Theorem~\ref{main_thm} which will be stated there. The proof uses the method developed in \cite{Badziahin}, and some propositions from that paper are used without proof. In section~\ref{conclusions} we prove Theorem~\ref{ourThm}. In Section~\ref{mainKey} we prove the crucial Theorem~\ref{main_thm} that is used in Section~\ref{mainTheorem}.

\vspace{8 mm}
\textbf{acknowledgements:} I would like to thank my advisor Barak Weiss for many helpful and encouraging discussions, as well as many suggestions during this work. I am very grateful to the referee for carefully reading this paper and spotting some inaccuracies in previous versions. Also, I thank the editor for some comments about the typing of this paper.

\section{Main Theorem}\label{mainTheorem}
Before we give the proof of Theorem~\ref{dim}, we need some notations and lemmata. For any $c>0$
define
\begin{equation}
\resizebox{.92\textwidth}{!}{$\displaystyle
\BAcij = \left\{(x,y)\in\R^2:\forall A,B,C\in\Z,(A,B)\neq (0,0)\;\; \max\left\{|A|^\frac{1}{i},|B|^\frac{1}{j}\right\}|Ax+By+C|>c\right\}.
$}
\end{equation}
We remark that here we use the dual formulation for $\BAcij$. By using a transference principle (cf. e.g. \cite{Badziahin}, Appendix), we note that
$$\BAij = \bigcup_{c>0}\BAcij.$$
Viewing it in this form, we see that (\ref{theta}) is a necessary condition on $\theta$ for the existence of a $y\in\R$ such that $(\theta,y)\in\BAij$.
For any $\C\subseteq\Theta$
\begin{equation}
\resizebox{.92\textwidth}{!}{$\displaystyle
\BAcij\cap\C=\C\backslash\bigcup_{(A,B,C)\in \Z^3\backslash\{0\}} \left\{(x,y):|Ax-By+C|\leq\frac{c}{\max\left\{|A|^\frac{1}{i},|B|^\frac{1}{j}\right\}}\right\}.
$}
\end{equation}
\noindent
For $B\neq0$, we see that a line
$$L(A,B,C):Ax-By+C=0$$
intersects $\Theta$ at a point $(\theta,y(L))$ where
$$y(L)=\frac{A\theta+C}{B}.$$
Denote by $\dl$ the points $(\theta,y)\in\Theta$ that are not in $\BAcij$ because they are too close to $(\theta,y(L))$, that is
$$\dl = \Theta \cap \left\{(x,y):|Ax-By+C|\leq\frac{c}{\max\left\{A^\frac{1}{i},B^\frac{1}{j}\right\}}\right\}.$$
Dividing by $B$ we get
\begin{equation}\label{dlLength}
|\dl|=\frac{2c}{H(A,B)},
\end{equation}
where if $I$ is an interval then $|I|$ is the diameter of $I$, and
$$H(A,B) \stackrel{\mathrm{def}}{=} B\cdot\max\left\{|A|^\frac{1}{i},|B|^\frac{1}{j}\right\}.$$
The plan is to prove that after removing all intervals $\dl$, still most of $\C$ is not removed.
We do it by constructing (recursively) a sequence of collections of disjoint intervals $\{\jn\}_{n\in\NEFES}$, for which
$$\forall n\in\N,J\in\jn\;\;\exists J'\in\j_{n-1}$$
such that
$$J=B\left(y_J,r\right)=\{y\in\R:d\left(y,y_J\right)\leq r\},$$
where $r=\frac{1}{2}c_1R^{-n}$ ($c_1$ is defined below in (\ref{c1})), $y_J\in J'$ and $J$ satisfies
\begin{equation}\label{inhyp1}
\dl\cap J=\varnothing \;\; {\rm for \; every} \;\; L=L(A,B,C) \;\; {\rm with} \;\; H(A,B)<R^{n-1},
\end{equation}
\noindent
and $R=R\left(i,j,b_1,b_2,\beta,\beta'\right)$ is a fixed integer that we choose later (cf. (\ref{Rtotal})). $\theta\in\BAi$ so by definition, there exists $c(\theta)$ that fulfils
$$\forall p\in\Z,q\in\N \;\; q^\frac{1}{i}|qx-p|>c(\theta).$$
Note that for any $c\leq c(\theta)$ it is enough to consider only lines $L(A,B,C)$ with
\begin{equation}
\label{line1}
\gcd(A,B,C)=1,\;\; B > 0.
\end{equation}
\noindent
This is the place to note that in the case $i=1,j=0$ we have $\BAij\cap\Theta=\Theta$, and the assertion of the theorem is classical. In the other extreme, $i=0,j=1$ we actually have $\BAij\cap\Theta=\{\theta\}\times(\mathbf{BA}\cap[0,1])$. Although we could modify the construction to deal with this case (cf. \cite{Badziahin}, Chap. 3.2), we note that the assertion of the theorem in this case is already known, proved independently in \cite{KW} and \cite{KTV}. We proceed assuming $i,j\neq 0$. Let
\begin{equation}\label{c1}
c_1 = \min\left\{c(\theta)R^{1+\alpha},\frac{1}{4}R^{-\frac{3i}{j}}\right\},
\end{equation}
where
\begin{equation}\label{alpha}
\alpha=\frac{\beta ij}{4}.
\end{equation}
Then,
\begin{equation}\label{c}
c=\frac{c_1}{R^{1+\alpha}}\leq c(\theta).
\end{equation}
We start the construction by looking at the following collection of closed subintervals of $\Theta$,
$$\tilde{\i}_0=\left\{B\left(y,\frac{1}{2}c_1\right):(\theta,y)\in\supp(\mu)\right\}.$$
By the $5r$-covering lemma (\cite{Mattila}, Chap. 2), choose a set of disjoint subintervals $\i_0\subseteq\tilde{\i}_0$ such that
$$\bigcup_{I\in\tilde{\i}_0}I\subseteq\bigcup_{I\in\i_0}5I,$$
where if $I=B(y,r)$, $\gamma\geq0$ then $\gamma I=B(y,\gamma r)$. In particular $\mu(\bigcup_{I\in\i_0}5I)=\mu(\Theta)=1$, since $\mu$ is a probability measure. For every $I\in\i_0$, $|I|=c_1$. Using the right hand side of (\ref{powerLawEq}) we get $\mu(5I)\leq b_2\left(\frac{5}{2}c_1\right)^\beta$ and
$$\#\i_0\geq \frac{\mu(\Theta)}{\max_{I\in\i_0}\mu(5I)}\geq b_2^{-1}\left(\frac{5}{2}c_1\right)^{-\beta},$$
where $\#$ denotes the number of elements of a finite set. Set $\j_0=\i_0$. This finishes the construction of the zero'th level. Let $n\in\N$ and assume that we are given the collections $\i_n,\jn$  and that $\jn$ satisfies (\ref{inhyp1}). Denote the collection of lines we should avoid in the $(n+1)$'th step by
\begin{equation}\label{Cn}
C(n)=\{L(A,B,C):L \;\; {\rm satisfies} \;\; (\ref{line1}) \;\; {\rm and} \;\; (\ref{H1}) \}
\end{equation}
where
\begin{equation}\label{H1}
R^{n-1}\leq H(A,B)<R^n.
\end{equation}
Notice that, using (\ref{dlLength}) and the definition of $c$ in (\ref{c}), a line $L\in C(n)$ satisfies
$$|\dl|=\frac{2c}{H(A,B)}\leq 2cR^{-n+1} \leq 2c_1R^{-n-\alpha}.$$
For each $I\in\i_n$ define the subinterval
$$I^- = \left(1-R^{-\alpha}\right)I.$$
The motivation for that is to ensure that every two disjoint intervals $I_1,I_2\in\i_n$ and a line $L\in C(n)$ satisfy
$$\Delta(L)\cap I_1^{-}\neq\varnothing\;\;\Rightarrow\;\;\Delta(L)\cap I_2^{-}=\varnothing.$$
and that for every $I\in\i_n$,
\begin{equation}\label{tmp}
2\Delta(L)\cap I^{-}\neq\varnothing\;\;\Rightarrow\;\;\Delta(L)\cap I\neq\varnothing.
\end{equation}
Next, for every $I'\in\i_n$ we define the intermediate collection
$$\tilde{\i}_{n+1}(I')=\left\{B\left(y,\frac{1}{2}c_1R^{-n-1}\right):(\theta,y)\in\supp(\mu)\cap {I'}^-\right\},$$
Apply the $5r$-covering lemma to $\tilde{\i}_{n+1}(I')$ to get a disjoint collection of subintervals $\i_{n+1}(I')$ such that
\begin{equation}\label{inplus1j}
\bigcup_{I\in\tilde{\i}_{n+1}(I')}I \subseteq \bigcup_{I\in\i_{n+1}(I')}5I.
\end{equation}
Define
\begin{equation}\label{inplus0}
\i_{n+1}=\bigcup_{I'\in\i_n}\i_{n+1}(I'),
\end{equation}
\begin{equation}\label{inplus1}
\i_{n+1}(\j)=\bigcup_{J\in\jn}\i_{n+1}(J).
\end{equation}
Note that
$|(I')^-| = c_1R^{-n}\left(1-R^{-\alpha}\right),$
and by (\ref{powerLaw}), for every $I\in\i_{n+1}(I')$, $\mu(5I)\leq b_2\left(\frac{5}{2}c_1R^{-(n+1)}\right)^\beta$ so
\begin{equation}
\resizebox{.9\textwidth}{!}{$\displaystyle
\#\i_{n+1}(I')\geq \frac{\mu({I'}^-)}{\max_{I\in\i_{n+1}}\mu(5I)}\geq \frac{b_1}{b_2}\left(\frac{|{I'}^-|}{|5I|}\right)^\beta=\frac{b_1}{5^\beta b_2}\left(R\left(1-R^{-\alpha}\right)\right)^\beta.
$}
\end{equation}
For the ease of calculations, take $R$ such that $R^{-\alpha}\leq \frac{1}{2}$ and $\beta\leq1$ so
\begin{equation}\label{jnNum}
\#\i_{n+1}(I')\geq \frac{b_1}{10b_2}R^\beta.
\end{equation}
To define $\j_{n+1}$, we remove intervals $I\in\i_{n+1}(\j)$ that intersect some $\Delta(L)$ for a line $L\in C(n)$, that is
$$\j_{n+1}=\{I\in\i_{n+1}(\j): \forall L\in C(n) \;\; \dl\cap I=\varnothing\}.$$
\noindent
We must show that $\j_{n+1}\neq\varnothing$, but in order to construct a measure with its support in $\C$ it is not enough to have an estimate on $\#\jn$. Rather, it is necessary to know more about the structure of $\{\j_n\}_{n\in\NEFES}$. Namely, we wish to use the notion of a tree-like family as in \cite{KW}. Unfortunately, $\{\j_n\}$ might have finite branches and we must pass to a subcollection.  Following \cite{Badziahin}, define,
\begin{equation}\label{Cnl}
C(n,\ell) \stackrel{\mathrm{def}}{=} \left\{L\in C(n): R^{-\lambda(\ell+1)}R^{\frac{nj}{j+1}}\leq B<R^{-\lambda \ell}R^{\frac{nj}{j+1}}\right\}, \;\; n,\ell \geq 0
\end{equation}
where
\begin{equation}\label{lambda}
\lambda=\frac{3}{j}.
\end{equation}
Recall that for $L(A,B,C)\in C(n)$, $B\geq1$ since (\ref{line1}) is satisfied, and
$$R^n>H(A,B)=B\max\left\{A^\frac{1}{i},B^\frac{1}{j}\right\}\geq B^{\frac{1+j}{j}},$$
so $B<R^{\frac{nj}{j+1}}$. Therefore, $C(n,\ell)$ is empty for $\ell>\frac{nj}{\lambda(j+1)}$ and for $\ell<0$, so
$$\bigcup_{\ell=0}^{\frac{nj}{\lambda(j+1)}}C(n,\ell)=C(n).$$
The following theorem is most important for our proof and Section~\ref{mainKey} is devoted to it.
\begin{thm}\label{main_thm}
Let $n,\ell\geq 0$, $\ell\leq\frac{nj}{\lambda(j+1)}$, and $J\in\j_{n-\ell}$. Let
\begin{equation}\label{epsilon0}
\varepsilon = \frac{\alpha\beta^2 ij}{20},
\end{equation}
and $R\geq R_1$ where
\begin{equation}\label{R1}
R_1 = \max\left\{R_0,\left(\frac{64b_2^2}{b_1^2}\right)^{\frac{10}{\alpha\beta^2 ij}},c_5^{\frac{2}{\alpha\beta}}\right\},
\end{equation}
$R_0$ is the solution of the equation
\begin{equation}\label{R0}
R_0^{\varepsilon}=\log_2R_0,
\end{equation}
and $c_5$ is as in (\ref{c5}).
Then,
\begin{equation}\label{main_thm_eq}
\#\{I\in\i_{n+1}(J):\exists L\in C(n,\ell) \;\; I\cap\dl\neq\varnothing\}\leq R^{\beta-\varepsilon}.
\end{equation}
where $\i_{n+1}(J) = \{I\in\i_{n+1}:I\subseteq J\}$ (For $J\in\jn$ this definition for $\i_{n+1}(J)$ coincides with the definition in (\ref{inplus1j})).
\end{thm}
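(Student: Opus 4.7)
The plan has two main ingredients: a per-line counting estimate based on the power law, and a Diophantine line-counting estimate based on $\theta\in\BAi$.

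First, I would show that for each single $L\in C(n,\ell)$, the number of intervals in $\i_{n+1}(J)$ that meet $\dl$ is at most $c_2 R^{(1-\alpha)\beta}$ for an explicit constant $c_2$ depending only on $b_1,b_2,\beta$. Since $H(A,B)\geq R^{n-1}$ forces $|\dl|\leq 2c_1 R^{-n-\alpha}$, while each $I\in\i_{n+1}$ is a disjoint ball of radius $\tfrac{1}{2}c_1 R^{-(n+1)}$ centered in $\supp(\mu)$, this is a direct application of (\ref{powerLawEq}) to the sum of measures $\sum\mu(I)$ over the intervals contained in the enlargement of $\dl$ by one radius. In view of (\ref{epsilon0}), it is therefore enough to prove that the number of lines $L\in C(n,\ell)$ with $\dl\cap J\neq\varnothing$ is of order at most $R^{\alpha\beta-\varepsilon}$.

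For the line count, the condition $\dl\cap J\neq\varnothing$ forces $y(L)$ into a slight enlargement $J^+$ of $J$ of length comparable to $|J|=c_1 R^{-(n-\ell)}$. For two distinct lines $L_k=L(A_k,B_k,C_k)\in C(n,\ell)$, setting $P=A_1 B_2-A_2 B_1$ and $Q=C_1 B_2-C_2 B_1$ yields
\[
y(L_1)-y(L_2)=\frac{P\theta+Q}{B_1 B_2}.
\]
If $P=0$, primitivity (\ref{line1}) rules out $Q=0$ and forces $|y(L_1)-y(L_2)|\geq 1/(B_1 B_2)$; if $P\neq 0$, the assumption $\theta\in\BAi$ gives $|P\theta+Q|>c(\theta)/|P|^{1/i}$ and hence $|y(L_1)-y(L_2)|>c(\theta)/(|P|^{1/i}B_1 B_2)$. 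The $C(n,\ell)$ constraints confine $B$ to a dyadic range $[B_-,B_+)$ of ratio $R^\lambda$ and force $|A|\leq (R^n/B_-)^i$, so upper bounds on $|P|$ and on $B_1 B_2$ follow.

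I would then organize the lines by the dyadic size of $|P|$ and bound those with $y(L)\in J^+$ by an estimate of the form $|J^+|/\delta$, where $\delta$ is the separation arising from the appropriate case above. The specific choice $\lambda=3/j$ in (\ref{lambda}) is calibrated precisely to balance $|J|$, $B_+$, and $A_*=(R^n/B_-)^i$ so that the resulting count is at most $c_3 R^{\alpha\beta-\varepsilon}$, uniformly in $n$ and $\ell$, for an explicit $c_3$. This uniformity in $n$ is the main technical obstacle: a naive application of $|J^+|/\delta_{\min}$ produces an $n$-growing factor, so the decomposition $C(n)=\bigcup_\ell C(n,\ell)$ together with the careful choice (\ref{lambda}) is indispensable in making every $n$-dependent term cancel. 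Multiplying the per-line bound from the first step by this count and using $R\geq R_1$ from (\ref{R1}) to absorb all constants then completes the proof of (\ref{main_thm_eq}).
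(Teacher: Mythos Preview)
Your first step---bounding the number of $I\in\i_{n+1}(J)$ meeting a single $\dl$ by a constant times $R^{(1-\alpha)\beta}$ via the power law---is correct and matches the paper's computation of $K^*$. The gap is in the line-counting step.

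The phrase ``organize the lines by the dyadic size of $|P|$'' is not well-defined: $P=A_1B_2-A_2B_1$ is a function of a \emph{pair} of lines, not of a single line, so a dyadic decomposition of $|P|$ does not partition the set of lines. More seriously, even granting some interpretation, the separation you derive does not give an $n$-free count. With the worst case $|P|\le 2A_{\max}B_+$ and $A_{\max}\le (R^n/B_-)^i$ one gets $\delta_{\min}^{-1}\lesssim R^n B_+^{2+1/i}/B_-$, so
\[
\frac{|J^+|}{\delta_{\min}}\ \lesssim\ R^{\ell}\,B_+^{1+1/i}\,R^{\lambda},
\]
and the exponent of $n$ coming from $B_+^{1+1/i}$ equals $\tfrac{j}{j+1}\cdot\tfrac{i+1}{i}>0$ for every admissible $(i,j)$. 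This is positive regardless of the value of $\lambda$: the decomposition into $C(n,\ell)$ with $\lambda=3/j$ is already built into the hypothesis and does not by itself cancel this growth. Your final paragraph asserts the cancellation without indicating any mechanism that would produce it.

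What the paper uses, and what your sketch omits entirely, is the structural fact (Proposition~\ref{thm3}, taken from \cite{Badziahin}) that \emph{all} lines in $C(n,\ell,\cdot,J)$ pass through a common rational point $P=(p/q,r/q)$. This converts the problem: after a further dyadic split $C(n,\ell)=\bigcup_k C(n,\ell,k)$ in $H(A,B)$, one subdivides $J$ into roughly $R^{1-2\varepsilon/\beta}/K$ pieces and applies a trichotomy (Proposition~\ref{5.4}). Either each piece sees at most two relevant lines (one exceptional, plus a single global $L_0$ whose effect was already absorbed at an earlier stage), or one obtains a lower bound on $\delta$ which, via an area argument in the region $\F_\ell$ of Proposition~\ref{F}, bounds the total number of lines by $M\lesssim \delta^{-1/i}R^{-5\ell/(ij)}$. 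It is in this last estimate---not in a pairwise separation count---that the choice $\lambda=3/j$ is calibrated. The common-point reduction is the essential missing idea; without it or a genuine substitute, the argument does not close.
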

\noindent
Informally speaking, Theorem~\ref{main_thm} says that our family $\j_n$ is a tree, for which every father has more than $\frac{b_1}{10b_2}R^\beta$ children (cf. (\ref{jnNum})), minus $R^{\beta-\varepsilon}$ vertices that may be removed by every father from every generation that descends it. (more precisely, a father in the $n_0$'th generation, is able to remove children from the $n$'th generation whenever $n>n_0$ satisfies $n-\frac{nj}{\lambda(j+1)}\leq n_0,$ that is $n\leq\frac{\lambda(j+1)}{\lambda(j+1)-j}n_0$.) In this situation it may be the case that some $J\in\j_n$ doesn't contain even a single element from $\j_{n+1}$. Nevertheless, there exists a subcollection on which the number of children is bounded from below.
\begin{defn}\label{treeLikeDef}
A tree-like family of intervals is a union of collections of closed intervals $\t = \bigcup_{n\in\NEFES}\t_n$ such that $\t_0=\{J_0\}$ and it satisfies the following:
\begin{enumerate}
\item $\forall I\in\t\;\;|I|>0$.
\item $\forall n\in\N \;\forall I_1,I_2\in\t_n\;\;{\rm either}\;\; I_1=I_2 \;\; {\rm or}\;\; \#I_1\cap I_2\leq1$.
\item $\forall n\in\N \;\forall I\in\t_n \;\exists J\in\t_{n-1} \;\;I\subseteq J$.
\item $\forall n\in\N \;\forall J\in\t_{n-1}\;\;\t_n(J)\neq\varnothing$, where
$$\t_n(J)=\{I\in\t_n:I\subseteq J\}.$$
\end{enumerate}
For $r\in\N$, the tree-like family is called \emph{$r$-regular} or \emph{regular of degree $r$} if for every $n\in\N,J\in\t_{n-1}$ $$\#\t_n(J)=r.$$
\end{defn}
The following property is proved in (\cite{Badziahin}, Chap.$7$, Lemma 4). We present the proof again to extend its context to ours.
\begin{lem}[`Ubiquity' of $\jn$]\label{ubique}
Let $J_0\in\j_0$, $\varepsilon$ as in (\ref{epsilon0}), $R\geq \max\{R_1,R_2\}$ where $R_1$ is as in (\ref{R1}), and
\begin{equation}\label{R2}
R_2=2^{\frac{2}{\beta}}.
\end{equation}
Let $\t$ be a regular tree-like subfamily of $\i=\bigcup_{n\in\NEFES}\i_n$ of degree $\lceil3R^{\beta-\varepsilon}\rceil,$ with $\t_0=\{J_0\}$.
Then, $\forall n\in\N$
$$\t_{n} \cap \j_{n} \neq \varnothing.$$
\end{lem}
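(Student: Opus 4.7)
The plan is to induct on $n$. Setting $T_n := \#(\t_n \cap \j_n)$ for $r = \lceil 3R^{\beta-\varepsilon}\rceil$, the target is $T_n \geq 1$ for every $n$, which visibly holds at $n = 0$ because $\t_0 = \{J_0\} \subseteq \j_0$.

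For the inductive step I would start from the identity
\[T_{n+1} = r T_n - B_{n+1},\]
which follows because by construction $\j_{n+1} \subseteq \i_{n+1}(\j)$: the parent of every element of $\t_{n+1} \cap \j_{n+1}$ lies in $\t_n \cap \j_n$, and each such parent contributes exactly $r$ children to $\t_{n+1}$, minus the $B_{n+1}$ children killed by intersecting some $\dl$ with $L \in C(n)$. The next step is to bound $B_{n+1}$ via Theorem~\ref{main_thm}. Decomposing $C(n) = \bigcup_{\ell=0}^{L_n} C(n,\ell)$ with $L_n = \lfloor nj/(\lambda(j+1))\rfloor$, any killed $I \in \t_{n+1}$ whose parent belongs to $\j_n$ must have its entire ancestral chain in $\j$; hence its level-$(n-\ell)$ ancestor $J$ lies in $\t_{n-\ell} \cap \j_{n-\ell}$. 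Applying the theorem to each such $J$ (using $\t_{n+1}(J) \subseteq \i_{n+1}(J)$) and summing over $\ell$ and the at most $T_{n-\ell}$ valid ancestors gives
\[B_{n+1} \leq R^{\beta-\varepsilon} \sum_{\ell = 0}^{L_n} T_{n-\ell}.\]

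The main obstacle is closing the recurrence $T_{n+1} \geq r T_n - R^{\beta-\varepsilon} \sum_{\ell=0}^{L_n} T_{n-\ell}$ into a useful lower bound. Inserting the trivial $T_m \leq r^m$ together with $r \geq 2$ (which follows from $R \geq R_2 = 2^{2/\beta}$) gives $B_{n+1} \leq 2R^{\beta-\varepsilon} r^n$, but using this alone already dissipates any starting lower bound on $T_n$ after a few steps; the actual argument must exploit the fact that $\sum_\ell T_{n-\ell}$ is dominated by $T_n$ plus a geometric tail. The correct approach, as in Chapter~7 of~\cite{Badziahin}, is a strong induction that tracks $T_n$ together with its geometric tail $\{T_{n-\ell}\}$: one uses the slack $r \geq 3R^{\beta-\varepsilon}$ to absorb the dominant $\ell = 0$ losses (which remove at most $R^{\beta-\varepsilon}T_n$ children per surviving parent) and the geometric decay in $\ell$ (justifiable inductively via $T_{n-\ell}$ comparable to $r^{-\ell} T_n$) to absorb the $\ell > 0$ tail, producing a multiplicative inequality $T_{n+1} \geq R^{\beta-\varepsilon} T_n$. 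Combined with $R^{\beta-\varepsilon}\geq 1$ (ensured by the conditions on $R$), this propagates the base case $T_0 = 1$ to $T_n \geq 1$ for all $n$, proving the lemma.
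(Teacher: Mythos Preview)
Your proposal is correct and follows essentially the same approach as the paper: define $f(n)=\#(\t_n\cap\j_n)$, bound the losses at level $n+1$ by $R^{\beta-\varepsilon}\sum_\ell f(n-\ell)$ via Theorem~\ref{main_thm}, and close the recurrence by proving the strong-induction hypothesis $f(n+1)\geq R^{\beta-\varepsilon}f(n)$, using that hypothesis itself to turn the $\ell$-sum into a geometric series bounded by $2$ (this is where $R\geq R_2$ enters). The only minor imprecision is your ``$T_{n-\ell}$ comparable to $r^{-\ell}T_n$'': the bound you actually need and get from the induction hypothesis is $T_{n-\ell}\leq (R^{\beta-\varepsilon})^{-\ell}T_n$, not $r^{-\ell}T_n$, and it is this (together with $R^{\varepsilon-\beta}\leq\tfrac12$) that makes the tail sum at most $2$ and leaves the required slack from $r\geq 3R^{\beta-\varepsilon}$.
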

\begin{proof}[Proof of Lemma~\ref{ubique} using Theorem~\ref{main_thm}]
Define the sequence
$$f(n)=\#\left(\jn\cap\tn\right),\;\; n\in\NEFES.$$
Using induction we will show that for every $n\in\NEFES$,
$$f(n)\geq R^{\beta-\varepsilon}f(n-1).$$
Assume $n\in\NEFES$. We will bound from above the number of intervals from $\t_{n+1}$ that aren't in $\j_{n+1}$. By (\ref{main_thm_eq}) we know that for each $1 \leq\ell\leq \frac{(n+1)j}{\lambda(j+1)}$, each father from $\ell$ generations above can remove no more than $R^{\beta-\varepsilon}$ intervals from each level of its successor. Considering the fact that only fathers from our $\t$ participate in that, the number of intervals that may be removed in this way is less than
$$\sum_{\ell=1}^{\frac{(n+1)j}{\lambda(j+1)}}R^{\beta-\varepsilon}f(n+1-\ell).$$
Repeatedly using the induction hypothesis up to $n$, we have
$$f(n-\ell)\leq \left(R^{\beta-\varepsilon}\right)^{-\ell}f(n).$$
Using (\ref{R2}) and (\ref{epsilon0}) we get $R^{\varepsilon-\beta}\leq\frac{1}{2}$ so
$$\sum_{\ell=0}^{\infty} R^{(\varepsilon-\beta)\ell} \leq 2.$$
Finally,
\begin{align*}
f(n+1)
&\geq
\lceil3R^{\beta-\varepsilon}\rceil f(n) - \sum_{\ell=1}^{\frac{(n+1)j}{\lambda(j+1)}}R^{\beta-\varepsilon}f(n+1-\ell)
\\ &\geq
3R^{\beta-\varepsilon}f(n) - R^{\beta-\varepsilon}f(n)\sum_{\ell=0}^\infty R^{(\varepsilon-\beta)\ell}
\geq
R^{\beta-\varepsilon}f(n).
\end{align*}
In particular $f(n)>0$ and we are done.
\end{proof}
\begin{defn}\label{ubiquity}
Let $F$ be a tree and assume $T\subseteq F$ is a subtree. For $r\in\N$, $T$ is said to have \emph{r-ubiquity with respect to $F$} if every regular tree of degree $r$, $F_r\subseteq F$, satisfies
$$F_r(n) \cap T(n) \neq \varnothing, \;\; \forall n\in\NEFES,$$
where $F_r(n)$ and $T(n)$ stands for the sets of vertices in the $n$'th generation of the tree.
\end{defn}
\noindent
Inspired by subsection $7.3$ in \cite{Badziahin}, we prove the following
\begin{thm}\label{rubiquity}
Assume $r_0\in\N$, $F_{r_0}$ is a regular tree of degree $r_0$, and $T\subseteq F_{r_0}$ is a tree with $r$-ubiquity with respect to $F_{r_0}$. Then there exists a regular tree of degree $r_0-r+1$ that is contained in $T$.
\end{thm}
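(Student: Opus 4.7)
I will argue by contrapositive: assuming $T$ contains no regular subtree of degree $r_0-r+1$, I construct a regular tree of degree $r$ inside $F_{r_0}$ that is disjoint from $T$ at some level, contradicting the $r$-ubiquity of $T$. The key structural feature I intend to exploit is that $T$ is a subtree of $F_{r_0}$ with the same root, hence is prefix-closed: if $v\notin T$, then no descendant of $v$ lies in $T$.

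For each $v\in F_{r_0}$, let $M(v)$ be the supremum of depths $d\in\NEFES$ for which some regular subtree of degree $r_0-r+1$ rooted at $v$, of depth $d$, is entirely contained in $T$; set $M(v)=-\infty$ if $v\notin T$. Since $F_{r_0}$ is finitely branching (of degree $r_0$), there are only finitely many candidate subtrees at each given depth, so König's lemma applied to the inverse system of restriction maps yields that $M(\mathrm{root})=\infty$ is equivalent to the existence of an infinite regular subtree of degree $r_0-r+1$ in $T$. So for contradiction I may assume $M(\mathrm{root})=d<\infty$.

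The heart of the argument is the following dichotomy: for every $v$ with $M(v)=d'<\infty$, the vertex $v$ has at least $r$ children $v'$ with $M(v')\le d'-1$. Indeed, if $v\in T$ and $M(v)=d'\ge 0$, then at most $r_0-r$ children can satisfy $M\ge d'$ (else one could extend a depth-$d'$ witness at $v$ by one level, forcing $M(v)\ge d'+1$), so at least $r$ children have $M\le d'-1$. If instead $v\notin T$ (so $d'=-\infty$), prefix-closure forces all $r_0\ge r$ children to be outside $T$ and hence to have $M=-\infty\le d'-1$. This uniform bound across both cases is exactly what allows the counting to survive once our construction has left $T$.

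Using the dichotomy, build $F_r$ recursively: let $F_r(0)=\{\mathrm{root}\}$, which satisfies $M\le d$; given $F_r(m)$ whose vertices all satisfy $M\le d-m$, select for each $v\in F_r(m)$ exactly $r$ children with $M\le d-m-1$ and let these form $F_r(m+1)$. At level $m=d+1$, every vertex satisfies $M\le -1$, i.e. $M=-\infty$, and so lies outside $T$. Extend $F_r$ beyond level $d+1$ arbitrarily by choosing any $r$ children at each subsequent step; prefix-closure ensures the continuation remains in $F_{r_0}\setminus T$. The resulting infinite regular tree of degree $r$ satisfies $F_r(d+1)\cap T=\varnothing$, contradicting the $r$-ubiquity of $T$. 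The main obstacle I anticipate is keeping the bookkeeping correct along branches that leave $T$, and, as indicated, the right handle is prefix-closure of $T$.
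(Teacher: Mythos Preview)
Your proof is correct. Both your argument and the paper's rest on the same two ingredients: a K\"onig-type compactness step reducing the existence of an infinite regular $(r_0-r+1)$-subtree to the existence of arbitrarily deep finite ones, and a pigeonhole at each vertex (``if $\ge r_0-r+1$ children carry a depth-$d'$ witness, then $v$ carries a depth-$(d'+1)$ witness''). The paper packages this as an induction on the depth of the finite tree, showing that at least $r_0-r+1$ children inherit $r$-ubiquity and otherwise gluing together the local counterexamples $F_{r,v}$ at a common level $\max_v n_v$. Your rank function $M$ is a cleaner bookkeeping device for the same descent: it replaces the inductive ``inherited ubiquity'' with a numerical invariant that drops by at least one along $r$ children, so the bad $F_r$ is built in a single sweep and the alignment of levels (the $\max_v n_v$ step) is automatic. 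The price is that you must separately handle vertices outside $T$ via prefix-closure, which you do correctly; the paper's version absorbs this into ``$T^v$ is empty, hence not $r$-ubiquitous.'' Neither approach is more general, but yours is slightly more uniform.
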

\begin{proof}
It is enough to prove the existence of a finite tree of any length. Indeed, assume we have a collection of regular subtrees of degree $r_0-r+1$ of every length, $\{T_n\}_{n\in\N}$. Generate an infinite tree $T_{\infty}$ by choosing the first generation of it to be $r_0-r+1$ vertices that appear infinitely many times in the finite trees $T_n$. Continue by induction, and choose the $m$'th level of $T_{\infty}$ to be vertices that appear infinitely many times in the trees $\{T_n\}_{n\geq m}$ that have the same $m-1$ level as $T_\infty$.

To prove existence of a tree of any finite length, we argue by induction on the length. For a tree of length $0$ the assertion is empty. Assume that every tree of length $n$ with $r$-ubiquity contains a regular subtree of degree $r_0-r+1$, and look at our tree $T$ up to level $n+1$. For at least $r_0-r+1$ vertices of the first generation, $v\in T(1)$, the tree $T^v$, which starts in $v$ and contains every vertex of $T$ that have $v$ as its ancestor, has $r$-ubiquity. Otherwise, construct a regular tree of degree $r$ that contradicts $r$-ubiquity by choosing its first level to be $r$ vertices for which $T^v$ doesn't have $r$-ubiquity. For every such $v$ there exists a regular subtree $F_{r,v}$ and $n_v\in\N$ such that $T^v\left(n_v\right)\cap F_{r,v}\left(n_v\right)=\varnothing$. This defines a tree $F_r$ for which we have
$$T^v(n)\cap F_{r}(n)=\varnothing,$$
where $n=\max_{v\in T(1)}\{n_v\}$, and therefore contradicts $r$-ubiquity of $T$.
Now, to construct a regular tree we can choose $r_0-r+1$ vertices $v$ from $T(1)$ for which $T^v$ has $r$-ubiquity. Use the induction hypothesis to find a regular tree of degree $r_0-r+1$ in each $T^v$ and use it to continue our regular tree up to level $n+1$. Thus we have found a regular tree of degree $r_0-r+1$ and of length $n+1$ which is contained in $T$.
\end{proof}

\begin{proof}[Deduction of Theorem \ref{dim} from Lemma~\ref{ubique} and Theorem~\ref{rubiquity}]
Let $\varepsilon$ be as in (\ref{epsilon0}), let $R_1,R_2$ be as in (\ref{R1}) and (\ref{R2}). Let
\begin{equation}\label{Rtotal}
R\geq \max\{R_1,R_2,R_3\},
\end{equation}
where
$R_3=\left(\frac{60b_2}{b_1}\right)^{\frac{1}{\varepsilon}}$.
Now take any regular subtree $\i'$ of $\i$ with degree $r_0=\lceil\frac{b_1}{10b_2}R^\beta\rceil$. There exists such a subtree because of (\ref{jnNum}). It is clear from Lemma~\ref{ubique} that the family $\{\j_n\}_{n\in\NEFES}$ has $r$-ubiquity with respect to $\i'$, with $r=\lceil3R^{\beta-\varepsilon}\rceil$. By Theorem~\ref{rubiquity} we can choose a collection $\tilde{\m}_n~\subseteq~\j_n$ such that for every $J'\in\tilde{\m}_n$,
\begin{equation}\label{numMtilde}
\# \{J\in\tilde{\m}_{n+1}(J')\} = \lceil\frac{b_1}{10b_2}R^\beta\rceil-\lceil3R^{\beta-\varepsilon}\rceil+1 \geq \lceil\frac{b_1}{20b_2}R^\beta\rceil,
\end{equation}
where the last inequality is true because $R\geq R_3$. Let $\{\mn\}_{n\in\NEFES}$ be such that $\mn\subseteq\tilde{\m}_n$ for every $n\in\N$ and equality holds in (\ref{numMtilde}), i.e.,
$$\# \{J\in\m_{n+1}(J')\} = \lceil\frac{b_1}{20b_2}R^\beta\rceil.$$
Note that we use $\m_0=\j_0$, but for calculating dimension we can ignore any finite number of levels of the construction. Denote
$$K_c = \bigcap_{n\in\NEFES}\bigcup_{J\in\mn}J.$$
To define the measure we want on $K_c$ we use the following standard lemma, proved in \ref{treeLikeAppendix}:
\begin{lem}\label{treeDim}
Let $\{\t_n\}_{n\in\NEFES}$ be a tree-like family of intervals. Assume that there exists $n_0\in\NEFES$ and $\gamma,R>0$ such that $\forall n\geq n_0$, $J\in\t_n$
$$\forall I\in\t_{n+1}(J)\;\;|I|=\frac{|J|}{R},$$
\begin{equation}\label{numOfChildren}
\#\t_{n+1}(J)=\gamma R.
\end{equation}
Then there exists a measure $\nu$ with $\supp(\nu)=\bigcap_{n\in\NEFES}\bigcup_{I\in\t_n}I$ satisfying a power law with exponent $\beta = \log_R(\gamma R)$.
\end{lem}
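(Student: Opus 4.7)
The plan is to build $\nu$ as the natural mass distribution carried by the tree, and then to verify the power law by comparing each ball $B(x,r)$ with the appropriate level of the tree.

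First I would define $\nu$ inductively starting at level $n_0$. Assign mass $1/\#\t_{n_0}$ to each $J\in \t_{n_0}$, and for $n\geq n_0$ and $I\in\t_{n+1}(J)$ set $\nu(I)=\nu(J)/(\gamma R)$. Since the hypothesis $\#\t_{n+1}(J)=\gamma R$ forces the child masses to sum to $\nu(J)$, this is consistent, and the set function extends uniquely to a Borel probability measure on $\R$ by the standard Kolmogorov/Carath\'eodory argument, using property (2) of a tree-like family to guarantee that same-level intervals overlap in at most a single point (which gets zero mass). The finitely many levels $0,\dots,n_0-1$ are handled by any legitimate mass-distribution into children allowed by property (4), since this only influences global constants. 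The support is precisely $K:=\bigcap_{n\in\NEFES}\bigcup_{I\in\t_n}I$, because any point outside $K$ fails to lie in $\bigcup_{I\in\t_n}I$ for some $n$, a set of $\nu$-measure $1$.

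Next I would verify the power law with exponent $\beta=\log_R(\gamma R)$, equivalently $\gamma R=R^\beta$. Fix $x\in K$ and $0<r<1$; values of $r$ bounded below by the diameter of $\t_{n_0}$-intervals are absorbed into constants since $\nu(B(x,r))\in(0,1]$. For small $r$, let $\ell=\max_{J\in\t_{n_0}}|J|$ and choose the unique integer $m>n_0$ with
$$\ell R^{-(m-n_0)}\leq r<\ell R^{-(m-n_0-1)}.$$
Because $x\in K$, it lies in some $I_m\in\t_m$, and $|I_m|\leq r$ implies $I_m\subseteq B(x,r)$, yielding
$$\nu(B(x,r))\geq \nu(I_m)\geq \frac{\min_{J\in\t_{n_0}}\nu(J)}{(\gamma R)^{m-n_0}}=c_1 R^{-\beta(m-n_0)}\geq b_1 r^\beta.$$
For the upper bound, the intervals in $\t_{m-1}$ are essentially disjoint on $\R$ and each has length exceeding $r$, so $B(x,r)$ (of diameter $2r$) meets at most three of them; summing the masses gives
$$\nu(B(x,r))\leq 3\cdot \max_{J\in\t_{n_0}}\nu(J)\cdot (\gamma R)^{-(m-1-n_0)}\leq b_2 r^\beta.$$

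The only non-routine step is the packing count behind the upper bound. It relies on an elementary one-dimensional fact: an interval of length $2r$ meets at most three pairwise essentially disjoint closed intervals of length larger than $r$. The main obstacle is therefore not conceptual but bookkeeping: one has to track how the inhomogeneity of the levels below $n_0$ and the possible variation of $|J|$ among $J\in\t_{n_0}$ get absorbed into the constants $b_1,b_2$, which is handled cleanly by working with $\ell=\max_{J\in\t_{n_0}}|J|$ and the extremal masses above.
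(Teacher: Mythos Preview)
Your proposal is correct and follows essentially the same route as the paper. The only notable difference is in the construction of $\nu$: the paper defines approximating probability measures $\nu_n$ by spreading Lebesgue measure over $\bigcup_{I\in\t_n}I$ and extracts a weak-$*$ limit, whereas you build $\nu$ directly via the mass-distribution/Carath\'eodory argument. Both constructions produce the same measure, and the verification of the power law (an $(m)$-level interval inside $B(x,r)$ for the lower bound, a bounded number of $(m-1)$-level intervals meeting $B(x,r)$ for the upper bound) is identical to the paper's. One small point: your ``at most three'' count for the upper bound tacitly assumes all level-$(m-1)$ intervals have length exceeding $r$, which need not hold when the $\t_{n_0}$-intervals have varying lengths; as you anticipate, the ratio $\max_{J\in\t_{n_0}}|J|/\min_{J\in\t_{n_0}}|J|$ simply enlarges the constant, so the count becomes ``at most $C$'' rather than three. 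The paper handles this by weighting level-$n_0$ intervals proportionally to their lengths rather than uniformly, but either choice only affects $b_1,b_2$.
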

\noindent
$\{\mn\}_{n\in\NEFES}$ satisfies the conditions of Lemma~\ref{treeDim} with
$\gamma = \frac{\lceil\frac{b_1}{20b_2}R^\beta\rceil}{R}$ and $n_0=1$. Therefore for every $R$ as in (\ref{Rtotal}) and $c=c(R)$ as in (\ref{c}) there exists a measure $\mu_c$ on $K_c$ satisfying a power law with an exponent
$$\beta_c = \log_R(\gamma R) = \beta-\log_R\frac{R^\beta}{\lceil\frac{b_1}{20b_2}R^\beta\rceil}\geq\beta-\log_R\frac{20b_2}{b_1}.$$
$\lim_{R\rightarrow\infty}\beta_{c(R)}=\beta$ so we have proved the main part of Theorem~\ref{dim}. $K_c\subseteq\BAij\cap\C$ so using the easy part of Frostman's lemma (\cite{Mattila}, Chap. 8), we get
$\dim(\BAij\cap\C)\geq\beta_{c(R)}$ for every $R$ as in (\ref{Rtotal}), so $\dim(\BAij\cap\C)=\beta$.
\end{proof}

\section{Conclusions}\label{conclusions}
In proving Theorem~\ref{ourThm} we need to be a little bit careful because of the fact that the sets $\BAij$ are not closed. Instead, we work with the support of the measure constructed in Theorem~\ref{dim}.
\begin{proof}[proof of Theorem~\ref{ourThm}]
Let $\varepsilon>0$. Use Theorem~\ref{dim} to find a measure $\mu_1$ satisfying a power law with exponent $\beta_1 \geq \beta-\frac{\varepsilon}{2}$ with $\supp(\mu_1)\subseteq\C\cap\BA\left(i_1,j_1\right)$. Generally, given $1<n\in\N$ and a measure $\mu_n$ satisfying $\supp\left(\mu_n\right)\subseteq\bigcap_{t=1}^{n-1}\supp\left(\mu_t\right)\cap\C\cap\BA\left(i_n,j_n\right)$, use Theorem~\ref{dim} for $t=n+1$ and $\bigcap_{t=1}^n\supp\left(\mu_t\right)\cap\C$, to find a measure $\mu_{n+1}$ with $\supp\left(\mu_{n+1}\right)\subseteq\bigcap_{t=1}^{n}\supp\left(\mu_t\right)\cap\C\cap\BA\left(i_{n+1},j_{n+1}\right)$ satisfying a power law with exponent $\beta_{n+1} \geq \beta_n - \frac{\varepsilon}{2^n}$.
Note that for any $n\in\N$,
$$\supp\left(\mu_n\right) = \bigcap_{t=1}^n \supp\left(\mu_t\right)\subseteq\bigcap_{t=1}^n\BA\left(i_t,j_t\right),$$
so in particular, by compactness of $\Theta$,
$$\bigcap_{t=1}^n \supp\left(\mu_t\right)\neq\varnothing \;\; \Rightarrow \;\;\bigcap_{t=1}^\infty \supp\left(\mu_t\right)\neq\varnothing.$$
\end{proof}

\section{Proof Of Theorem 8}\label{mainKey}
Following Badziahin-Pollington-Velani, define
$$C(n,\ell,k) = \{L\in C(n,\ell):2^kR^{n-1}\leq H(A,B)<2^{k+1}R^{n-1},\;\;n,\ell,k\in\NEFES\}.$$
Then by (\ref{Cn}) and (\ref{Cnl}) we have
$$C(n,\ell) = \bigcup_{k=0}^{\lceil\log_2 R\rceil-1} C(n,\ell,k).$$
To prove Theorem~\ref{main_thm}, it'll be enough to prove
\begin{thm}\label{thm4}
Let $n,\ell,k\geq0$, and $J\in\j_{n-\ell}$. For $\varepsilon,R$ that satisfy
\begin{equation}\label{epsilonR3}
R^{-\varepsilon} + R^{\varepsilon-\alpha\beta} < \frac{1}{2}\left(\frac{b_1}{4b_2}\right)^2
\end{equation}
\begin{equation}\label{epsilonR4}
R^{\alpha\beta - \left(\frac{4}{\beta ij}+1\right)\varepsilon} > c_5
\end{equation}
\noindent
where
\begin{equation}\label{c5}
c_5 = 4^{\frac{2}{ij}+2}\frac{b_2}{b_1},
\end{equation}
we have
$$\#\{I\in\i_{n+1}(J):\exists L\in C(n,\ell,k)\;I\cap\dl\neq\varnothing\}\leq R^{\beta-\varepsilon}.$$
\end{thm}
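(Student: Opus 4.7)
The plan is to follow the Badziahin--Pollington--Velani strategy by decomposing
$$\#\{I\in\i_{n+1}(J):\exists L\in C(n,\ell,k)\;I\cap\dl\neq\varnothing\} \;\leq\; N_1\cdot N_2,$$
where $N_1$ is the maximal number of intervals of $\i_{n+1}(J)$ that any single strip $\dl$, $L\in C(n,\ell,k)$, can meet, and $N_2$ is the number of distinct lines $L\in C(n,\ell,k)$ whose strip $\dl$ comes close to $J$. The hypotheses (\ref{epsilonR3}) and (\ref{epsilonR4}) are calibrated exactly so that $N_1\cdot N_2 \leq R^{\beta-\varepsilon}$.

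For $N_1$: for $L\in C(n,\ell,k)$ the definitions (\ref{dlLength}), (\ref{c}) and the lower bound $H(A,B) \geq 2^k R^{n-1}$ give $|\dl| \leq 2c_1\cdot 2^{-k} R^{-n-\alpha}$. Every $I\in\i_{n+1}$ is a disjoint ball of radius $\tfrac12 c_1 R^{-n-1}$ centered on $\supp(\mu)$, so any $I\in\i_{n+1}(J)$ meeting $\dl$ has its center inside the $\tfrac12 c_1 R^{-n-1}$-neighborhood of $\dl$, a strip of length at most $|\dl|+c_1 R^{-n-1}$. Applying the upper bound of (\ref{powerLawEq}) to this strip and the lower bound of (\ref{powerLawEq}) to each individual ball yields
$$N_1 \;\leq\; \frac{b_2}{b_1}\bigl(4\cdot 2^{-k} R^{1-\alpha} + 2\bigr)^\beta.$$

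For $N_2$: this is the heart of the argument. Given two distinct lines $L_s=L(A_s,B_s,C_s)\in C(n,\ell,k)$ (for $s=1,2$) whose strips both meet $J$, the elementary identity
$$B_1 B_2\bigl(y(L_1)-y(L_2)\bigr) \;=\; (A_1 B_2 - A_2 B_1)\theta + (C_1 B_2 - C_2 B_1)$$
converts the smallness of $|y(L_1)-y(L_2)|$ into smallness of $|A'\theta+C'|$, where $A'=A_1B_2-A_2B_1$ and $C'=C_1B_2-C_2B_1$. When $A'\neq 0$, combining with $\theta\in\BAi$ (via (\ref{theta})) gives a positive lower bound on $|A'|$. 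Comparing it with the direct upper bound coming from the $B$- and $H$-localization in the definition of $C(n,\ell,k)$, the choice $\lambda=3/j$ in (\ref{lambda}) is precisely what restricts the admissible pairs to a tame cluster. The degenerate case $A'=0$ corresponds to $L_1, L_2$ having the same reduced $(A,B)$; $\gcd(A,B,C)=1$ then bounds the number of admissible $C$-values by $B\cdot(|\dl|+|J|)+1$. Summing contributions gives the required polynomial control on $N_2$, with the constant $c_5$ of (\ref{c5}) absorbing factors of $b_2/b_1$ and the dyadic sum in $k$.

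The main obstacle is that the cluster reduction requires showing that too many lines in $C(n,\ell,k)$ near $J$ would force, via differencing, an auxiliary primitive line whose $H$-parameter falls below $R^{n-\ell-1}$; such a line would contradict (\ref{inhyp1}) at the level of $J\in\j_{n-\ell}$. This is where the dyadic decomposition by $\ell$ and by $k$ is essential, and where hypothesis (\ref{epsilonR4}) enters to absorb the resulting constants. Putting the two estimates together, the desired inequality $N_1\cdot N_2 \leq R^{\beta-\varepsilon}$ reduces to two algebraic inequalities in $R,\varepsilon,\alpha,\beta,i,j$, which are precisely (\ref{epsilonR3}) and (\ref{epsilonR4}).
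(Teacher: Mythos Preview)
Your outline has the right overall shape—a bound of the form (intervals per line)$\times$(number of lines)—and your estimate for $N_1$ via the power law matches the paper's $K^*$. The genuine gap is in your treatment of $N_2$. The differencing identity you write down is precisely what underlies Proposition~\ref{thm3}: it forces all lines in $C(n,\ell,k,J)$ through a common rational point $P$. But this is only a preliminary; it does not by itself bound the number of lines, and a straight product $N_1\cdot N_2$ is not what the paper proves. Instead, once $P$ is fixed, the paper covers $J\cap\C$ by $d^*\le 4(b_2/b_1)d^\beta$ subintervals $G_i$ of length $\tau R^{-n}$ (with $d$ and $\tau$ chosen so that $\tau\ge cR2^{-k}$) and invokes the trichotomy of Proposition~\ref{5.4}. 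The proof then splits on the size of the quantity $\delta$ from Proposition~\ref{5.2}.

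In Case~1 ($\delta$ small), one does \emph{not} bound $N_2$: Proposition~\ref{5.4} shows that all but one line per $G_i$ have $\dl\subseteq 2\Delta(L_0)$ for a single auxiliary line $L_0$ with $H(A_0,B_0)<R^n$ (not $<R^{n-\ell-1}$ as you wrote), and one argues that intervals meeting $2\Delta(L_0)$ were already excised at an earlier stage of the $\j$-construction (or contribute a bounded amount if $L_0\in C(n)$). This is not a contradiction with (\ref{inhyp1}); it is a bookkeeping argument, and the total $d^*K^*+O(R^{\beta(1-\alpha)})\le R^{\beta-\varepsilon}$ here is exactly condition~(\ref{epsilonR3}). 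In Case~2 ($\delta$ large), one uses the triangle-area argument of Proposition~\ref{F}—not your differencing argument—to bound the total number $M$ of lines through $P$ in $\F$ or $\F_\ell$, and then $MK^*\le R^{\beta-\varepsilon}$ is what requires condition~(\ref{epsilonR4}). Your sketch collapses these two mechanisms into one and assigns the hypotheses to the wrong places; without the $G_i$-covering, the $\delta$-dichotomy, and the area bound of Proposition~\ref{F}, the count cannot be closed with the stated constants.
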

\begin{proof}[Deduction of Theorem~\ref{main_thm} from Theorem~\ref{thm4}]
\indent
Let $\varepsilon_0$ be as in (\ref{epsilon0}) and
$$\varepsilon_1=2\varepsilon_0=\frac{\alpha\beta^2 ij}{10}.$$
Note that
$$\left(1+\frac{4}{\beta ij}\right)\varepsilon_1=\frac{(\beta ij+4)\alpha\beta}{10}<\frac{\alpha\beta}{2},$$
so substituting $\varepsilon=\varepsilon_1$ in the conditions of Theorem~\ref{thm4}, it is enough to ask for the simpler conditions
$$R^{\frac{\alpha\beta^2 ij}{10}} > \frac{64b_2^2}{b_1^2},$$
$$R^{\frac{\alpha\beta}{2}} > c_5,$$
Let $R\geq R_1$ where $R_1$ is as in (\ref{R1}). Evidently, these conditions are satisfied with $\varepsilon_1,R$. Therefore for every $0\leq k < \log_2R$,
$$\#\{I\in\i_{n+1}(J):\exists L\in C(n,\ell)\;I\cap\dl\neq\varnothing\}\leq R^{\beta-\varepsilon_1}.$$
Using the fact that $R\geq R_1\geq R_0$, where $R_0$ is as in (\ref{R0}), we get
$$\#\{I\in\i_{n+1}(J):\exists L\in C(n,\ell)\;I\cap\dl\neq\varnothing\}\leq R^{\beta-\varepsilon_1}\log_2R\leq R^{\beta-\varepsilon_0}.$$
\end{proof}
\noindent
The conditions (\ref{epsilonR3}), (\ref{epsilonR4}) arise naturally in the proof of Theorem~\ref{thm4}. To prove it, we cite $4$ propositions from \cite{Badziahin}. We only add a notation for convenience and state the propositions using the new notation. For the proofs see \cite{Badziahin}. For $n,\ell,k\in \NEFES$, $J\subseteq \Theta$, denote
$$C(n,\ell,k,J)=\{L\in C(n,\ell,k):L\cap J\neq\varnothing\},$$
and for any $P=\left(\frac{p}{q},\frac{r}{q}\right)$ denote
$$C(n,\ell,k,J,P)=\{L\in C(n,\ell,k,J):P\in L\}.$$
By putting the sign $\cdot$ at any coordinate (except for the first) we mean indifference with respect to that coordinate. For example,
$$C(n,\cdot,k)=\bigcup_{\ell=0}^{\frac{nj}{\lambda(j+1)}}C(n,\ell,k)$$
$$C(n,\ell,\cdot,J,P)=\{L\in C(n,\ell): L\cap J\neq\varnothing, \;\; P\in L\}.$$
\begin{prop}[cf. \cite{Badziahin}, Theorem 3]\label{thm3}
Let $n,\ell\in\NEFES$, $J$ be an interval of length $|J|\leq c_1R^{-n+\ell}$. Then there exists a rational point $P$ such that $C(n,\ell,\cdot,J)=C(n,\ell,\cdot,J,P)$.
\end{prop}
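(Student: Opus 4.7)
The goal is to find a single rational point $P$ lying on every line of $C(n,\ell,\cdot,J)$; when this collection has at most one element any point suffices, so assume there are two distinct members $L_1,L_2$. My first step is to rule out the parallel case: if $A_1/B_1=A_2/B_2$, then distinctness of $L_1,L_2$ forces $C_1B_2-C_2B_1$ to be a nonzero integer, so $|y(L_1)-y(L_2)|=|C_1B_2-C_2B_1|/(B_1B_2)\ge 1/(B_1B_2)$. Both $y(L_i)$ lie in $J$, so $1/(B_1B_2)\le |J|\le c_1R^{-n+\ell}$, i.e.\ $B_1B_2\ge R^{n-\ell}/c_1$. But the definition of $C(n,\ell)$ and $\lambda=3/j$ yield $B_1B_2<R^{2nj/(j+1)-2\lambda\ell}$, and using $i+j=1$ together with $c_1\le\tfrac14R^{-3i/j}$ this is incompatible, so no two lines in $C(n,\ell,\cdot,J)$ are parallel.

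With no two parallel, any two lines $L_1,L_2\in C(n,\ell,\cdot,J)$ meet at a unique point $P=(x^*,y^*)$, rational by Cramer's rule, with $x^*-\theta=B_1B_2(y(L_1)-y(L_2))/(A_2B_1-A_1B_2)$, so that $P$ lies close to $\Theta$. It remains, and this is the main obstacle, to show that a third line $L_3\in C(n,\ell,\cdot,J)$ necessarily passes through the same $P$. Concurrency of $L_1,L_2,L_3$ is equivalent to vanishing of the integer
$$D=\det\begin{pmatrix}A_1 & A_2 & A_3\\ -B_1 & -B_2 & -B_3\\ C_1 & C_2 & C_3\end{pmatrix},$$
and since $D\in\Z$, it suffices to prove $|D|<1$.

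To bound $D$, pick $y=y_J$, the center of the ball $J$, so that $|y(L_i)-y|\le|J|/2$ for each $i$. Performing the row operation $R_3\to R_3+\theta R_1+yR_2$ replaces the third row by $(A_i\theta-B_iy+C_i)_i=(B_i(y(L_i)-y))_i$, whose $i$th entry is bounded by $B_i|J|/2$. Expanding $D$ along the new row and bounding each $2\times 2$ minor by $2A_{\max}B_{\max}$ yields $|D|\le 3A_{\max}B_{\max}^2|J|$. Using the bounds $B_{\max}<R^{-\lambda\ell+nj/(j+1)}$, $A_{\max}\le R^{ni/(j+1)+i\lambda(\ell+1)}$ (both from $L_i\in C(n,\ell)$) and $|J|\le c_1R^{-n+\ell}$, one checks that the relation $i+j=1$ cancels the $n$-dependence entirely and leaves an exponent of at most $3i/j$ in $R$; the engineered choice $c_1\le\tfrac14R^{-3i/j}$ then gives $A_{\max}B_{\max}^2|J|\le\tfrac14$, hence $|D|\le\tfrac34<1$, so $D=0$. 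The delicate point is that the constants $\lambda=3/j$ and the $R^{-3i/j}/4$ factor in (\ref{c1}) are designed exactly for this determinant estimate to close with room to spare for every admissible $\ell$.
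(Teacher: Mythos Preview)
Your argument is correct. The paper itself does not prove this proposition; it explicitly cites \cite{Badziahin}, Theorem~3, and refers the reader there for a proof (the accompanying remark only notes that the proof is insensitive to the precise value of $\alpha$). What you have written is essentially the Badziahin--Pollington--Velani argument: rule out parallels by a gap estimate on $y(L_1)-y(L_2)$, then force concurrency of any three lines by showing the integer determinant
\[
D=\det\begin{pmatrix}A_1 & A_2 & A_3\\ -B_1 & -B_2 & -B_3\\ C_1 & C_2 & C_3\end{pmatrix}
\]
has $|D|<1$ after the row operation $R_3\mapsto R_3+\theta R_1+yR_2$. Your bookkeeping with the exponents is right: the $n$-dependence cancels via $i+2j=1+j$, and the worst case in $\ell$ is $\ell=0$, leaving $A_{\max}B_{\max}^2|J|\le c_1R^{3i/j}\le\tfrac14$, hence $|D|\le\tfrac34$. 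The aside about $x^*-\theta$ is correct but not needed for the conclusion; you can drop it without loss.
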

\begin{rem*}
In \cite{Badziahin}, this theorem is phrased slightly differently, because there $\alpha=\frac{ij}{4}$ while in this paper, adjusting to the setting of power law measures requires $\alpha=\frac{\beta ij}{4}$. The proof actually only uses the fact $\alpha>0$. The reason for choosing $\alpha$ in this specific way will become clear in the proof of Theorem~\ref{thm4}.
\end{rem*}
\begin{prop}[cf. \cite{Badziahin}, Section 5.2, Lemma 2]\label{5.2}
Let $n,k\in\NEFES$, $J\subseteq\Theta$, $P=\left(\frac{p}{q},\frac{r}{q}\right)$, $L_1,L_2\in C(n,\cdot,k,J,P)$, $L_1\neq L_2$.
Set $\tau = |J|R^{n}$. Then there exists $0<\delta<1$ such that
$$|q\theta-p|=\delta\frac{\tau2^{k+1+i}}{q^iR}.$$
\end{prop}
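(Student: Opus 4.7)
The plan is to extract an upper bound on $|q\theta-p|$ from two ingredients: the geometric fact that both lines pass through the common rational point $P$ and meet the small vertical window $J$, and the arithmetic bound $H(A_i,B_i)<2^{k+1}R^{n-1}$.

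First I would use $P\in L_i$, i.e. $A_ip-B_ir+C_iq=0$, to eliminate $C_i$ in the formula $y(L_i)=(A_i\theta+C_i)/B_i$. A direct substitution gives
$$y(L_i)=\frac{r}{q}+\frac{A_i(q\theta-p)}{qB_i},$$
and subtracting the two cases yields
$$y(L_1)-y(L_2)=\frac{(q\theta-p)(A_1B_2-A_2B_1)}{qB_1B_2}.$$
Since both $L_i$ meet $\Theta$ inside $J$, we have $|y(L_1)-y(L_2)|\leq |J|=\tau R^{-n}$, so isolating $|q\theta-p|$ produces
$$|q\theta-p|\;\leq\;\frac{qB_1B_2\tau R^{-n}}{|A_1B_2-A_2B_1|}.$$

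Next comes the arithmetic input. Viewing $(A_i,-B_i,C_i)$ as primitive line coordinates in $\Z^3$ and $P=(p/q,r/q)$ as a point in the projective plane, the cross product $(A_1,-B_1,C_1)\times(A_2,-B_2,C_2)$ is an integer vector proportional to the primitive homogeneous representative of $P$. Its third coordinate is $A_2B_1-A_1B_2$, so after reducing to $\gcd(p,r,q)=1$ one obtains the divisibility $q\mid A_1B_2-A_2B_1$ and hence $|A_1B_2-A_2B_1|\geq q$. Plugging in yields $|q\theta-p|\leq B_1B_2|J|$.

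The hard part will be upgrading this to the stronger bound $|q\theta-p|\leq\tau 2^{k+1+i}/(q^iR)$, equivalently $q^iB_1B_2\leq 2^{k+1+i}R^{n-1}$. Here one must combine the height bound $H_i<2^{k+1}R^{n-1}$---which gives the unconditional estimates $B_i\leq H_i^{j/(j+1)}$ and $|A_i|\leq(H_i/B_i)^i$---with the divisibility $q\mid A_ip-B_ir$ forced by $P\in L_i$, which prevents $(A_i,B_i)$ from being too small relative to $q$. The identity $i+j=1$ then enters in the exponent bookkeeping, converting the single factor of $q$ coming from $|A_1B_2-A_2B_1|\geq q$ into the desired factor $q^i$ on the right-hand side, with the constant $2^{k+1+i}$ absorbing the slack between $H_i$ and the dyadic bound $2^{k+1}R^{n-1}$.

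Finally one sets $\delta=|q\theta-p|\cdot q^iR/(\tau 2^{k+1+i})$. The displayed inequality gives $\delta<1$, and $\delta>0$ because $\theta\in\BAi$ with $i>0$ forces $\theta$ to be irrational, so $|q\theta-p|\neq 0$.
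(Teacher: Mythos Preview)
The paper does not reproduce a proof here; it simply defers to \cite{Badziahin}, \S5.2. Your plan is exactly the argument given there, and your reduction of the ``hard part'' to the inequality $q^iB_1B_2<2^{k+1+i}R^{n-1}$ is the correct target.

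The one place where your plan drifts is the ingredient you name for that inequality. The individual congruence $q\mid A_mp-B_mr$ is not what drives it; the relevant input is the \emph{upper} bound on $q$ already implicit in your second paragraph, namely
\[
q\;\leq\;|A_1B_2-A_2B_1|\;\leq\;2\max\{|A_1|B_2,\,|A_2|B_1\}.
\]
Say without loss of generality $q\leq 2|A_1|B_2$. Then $|A_1|\leq(H_1/B_1)^i$ gives $q^i\leq 2^iH_1^{i^2}B_1^{-i^2}B_2^{\,i}$, hence
\[
q^iB_1B_2\;\leq\;2^iH_1^{i^2}B_1^{\,1-i^2}B_2^{\,1+i}\;=\;2^iH_1^{i^2}\bigl(B_1^{\,j}B_2\bigr)^{1+i},
\]
using $1-i^2=j(1+i)$. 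Now feed in the unconditional bound $B_m\leq H_m^{j/(j+1)}$; the resulting exponents of $H_1$ and $H_2$ sum to $i^2+j(1+i)=i^2+j+ij=1$. Since $H_1,H_2<2^{k+1}R^{n-1}$, this yields $q^iB_1B_2<2^{k+1+i}R^{n-1}$, exactly as you wanted. Your final remark that $\delta>0$ follows from $\theta\in\BAi$ (hence $\theta\notin\mathbb{Q}$) is correct.
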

\begin{prop}[cf. \cite{Badziahin}, Section 5.3]\label{F}
Under the notations of Proposition \ref{5.2}, one of the lines satisfies
\begin{equation}\label{FEq}
(A,B)\in\F = \left\{(A,B):|A|<\left(c_2B\right)^i, \;\; 0<B<c_2^\frac{j}{i}\right\},
\end{equation}
where
\begin{equation}\label{c2}
c_2=\frac{q^i}{2^i\delta}.
\end{equation}
Moreover, if for some $\ell>0$, $L_1,L_2\in C(n,\ell,k,J,P)$ then one of the lines $L_1,L_2$ satisfies
\begin{equation}\label{FlEq}
(A,B)\in\F_\ell = \left\{(A,B):|A|<\left(c_2B\right)^i<c_3^ic_2\right\},
\end{equation}
where
\begin{equation}\label{c3}
c_3=c_3(\ell)=R^{\frac{j-\lambda \ell(j+1)}{i}}.
\end{equation}
\end{prop}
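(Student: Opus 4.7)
The plan is to exploit the fact that both lines pass through $P=(p/q,r/q)$, to translate this into an explicit formula for $y(L_i)$, and then to combine the geometric constraint $y(L_i)\in J$ with the arithmetic constraint $|A_1B_2-A_2B_1|\geq1$ and the Diophantine input $|q\theta-p|=\delta\tau 2^{k+1+i}/(q^iR)$ from Proposition~\ref{5.2}.

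First I would derive the key identity. Substituting $(x,y)=(p/q,r/q)$ in $A_ix-B_iy+C_i=0$ gives $C_i=(B_ir-A_ip)/q$, and plugging this back and evaluating at $x=\theta$ yields
$$y(L_i)-\frac{r}{q}=\frac{A_i(q\theta-p)}{qB_i},\qquad i=1,2.$$
Subtracting the two cases and using both $|y(L_1)-y(L_2)|\leq|J|$ and the fact that $A_1B_2-A_2B_1$ is a nonzero integer (else equal slopes through a common point would force $L_1=L_2$), one obtains
$$\frac{1}{B_1B_2}\leq\left|\frac{A_1}{B_1}-\frac{A_2}{B_2}\right|=\frac{q|y(L_1)-y(L_2)|}{|q\theta-p|}\leq\frac{q|J|}{|q\theta-p|}.$$

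Next I would substitute the value of $|q\theta-p|$ from Proposition~\ref{5.2} (using $\tau=|J|R^n$) to cancel $|J|$ and produce a lower bound on $B_1B_2$ proportional to $\delta R^{n-1}2^k/q^{i+1}$. Pairing this with the upper bound $B_i<(2^{k+1}R^{n-1})^{j/(j+1)}$ coming from $B_i^{(j+1)/j}\leq H(A_i,B_i)<2^{k+1}R^{n-1}$, the smaller of $B_1,B_2$ is squeezed into a range whose precise arithmetic form is $B<c_2^{j/i}$. Returning to the identity $|A_i|=qB_i|y(L_i)-r/q|/|q\theta-p|$, using the bound on $B$ just established and Proposition~\ref{5.2} once more to estimate $|q\theta-p|$, one obtains the companion bound $|A|<(c_2B)^i$ on the \emph{same} line; collecting the constants reveals $c_2=q^i/(2^i\delta)$ exactly.

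For the moreover clause, the additional hypothesis $L_1,L_2\in C(n,\ell,k,J,P)$ with $\ell>0$ supplies the upper bound $B<R^{-\lambda\ell}R^{nj/(j+1)}$ from (\ref{Cnl}). Inserting this into $(c_2B)^i$ and invoking the definition $c_3=R^{(j-\lambda\ell(j+1))/i}$ in (\ref{c3}), a direct exponent computation verifies $(c_2B)^i<c_3^ic_2$, which is the defining condition of $\F_\ell$. The main technical obstacle is the careful bookkeeping of the powers of $q$, $\delta$, $R$ and $2^k$ and the verification that a single line (not two different ones) simultaneously satisfies both the $|A|$-bound and the $B$-bound; this parallels the calculation in \cite{Badziahin}, Section~5.3.
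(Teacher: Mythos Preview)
The paper itself does not prove Proposition~\ref{F}; it explicitly defers to \cite{Badziahin}, Section~5.3. So there is no ``paper's own proof'' to compare against, and your sketch is essentially an attempt to reconstruct the BPV argument.

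Your framework is correct: the identity $y(L_i)-r/q=\dfrac{A_i(q\theta-p)}{qB_i}$, the difference bound via $|y(L_1)-y(L_2)|\leq|J|$, and the nonvanishing of $A_1B_2-A_2B_1$ are exactly the ingredients BPV use. However, there is a genuine gap in the middle of your sketch. From
\[
\frac{1}{B_1B_2}\leq\frac{q|J|}{|q\theta-p|}
\]
you obtain a \emph{lower} bound on $B_1B_2$, and you then pair it with the upper bound $B_i<(2^{k+1}R^{n-1})^{j/(j+1)}$ to conclude that ``the smaller of $B_1,B_2$ is squeezed into a range whose precise arithmetic form is $B<c_2^{j/i}$''. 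But a lower bound on the product together with upper bounds on the factors yields only a \emph{lower} bound on each factor; the only upper bound you get on $\min(B_1,B_2)$ is the trivial one $(2^{k+1}R^{n-1})^{j/(j+1)}$, which is not $c_2^{j/i}$ in general. So this step does not go through as written.

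Relatedly, your derivation of $|A|<(c_2B)^i$ from $|A_i|=qB_i|y(L_i)-r/q|/|q\theta-p|$ requires a bound on $|y(L_i)-r/q|$, but $r/q$ need not lie in $J$, so $|y(L_i)-r/q|\leq|J|$ is not immediate. In BPV the argument is sharper: one uses that in fact $q\mid A_1B_2-A_2B_1$ (since $P=(p/q,r/q)$ with $\gcd(p,q)=1$ lies on both lines, the relation $(A_1B_2-A_2B_1)p\equiv 0\pmod q$ forces this), which strengthens the integer constraint by a factor of $q$. The route is then to establish the bound on $|A|$ first and to extract $B<c_2^{j/i}$ from the interplay with $H(A,B)\in[2^kR^{n-1},2^{k+1}R^{n-1})$, rather than from the product bound. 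Your sketch has the right pieces but assembles them in an order that does not close; you should revisit the BPV computation for the precise logical flow.
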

\begin{prop}[cf. \cite{Badziahin}, Section 5.5, Proposition 1]\label{5.4}
Let $n,\ell\in\NEFES$, $0\leq k < \log_2R$, $P=\left(\frac{p}{q},\frac{r}{q}\right)$, and
$$\tau\geq cR2^{-k}.$$
Then there exists a line $L_0\left(A_0,B_0,C_0\right)$ that passes through $P$ and satisfies $H\left(A_0,B_0\right)<R^n$, such that for every subinterval $G\subseteq\Theta$ of length $|G|=\tau R^{-n}$, one of the following holds:
\begin{enumerate}
    \item $\# C(n,\ell,k,G,P)\leq 1$.
    \item Every $L\in C(n,\ell,k,G,P)$ satisfies $\dl\subseteq 2\Delta(L_0)$ besides possibly $1$ exceptional line.
    \item $\delta$ from Proposition \ref{5.2} satisfies
\begin{equation}\label{delta}
\delta > c_4\left(\frac{cR}{2^k\tau}\right)^\frac{2}{j}
\end{equation}
where
\begin{equation}\label{c4}
c_4=4^{-\frac{2}{j}}2^{-i}.
\end{equation}
\end{enumerate}
\end{prop}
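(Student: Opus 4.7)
The structure of the trichotomy suggests proving that failure of alternatives 1 and 3 forces alternative 2, with $L_0$ constructed from $P$ alone. My first observation is that $\delta$ from Proposition \ref{5.2} depends only on $P, k, \tau$: solving $|q\theta - p| = \delta\,\tau\,2^{k+1+i}/(q^i R)$ gives $\delta = |q\theta - p|\,q^i R / (\tau\,2^{k+1+i})$, independent of $G$. Consequently the condition (\ref{delta}) of alternative 3 depends only on $(P, k, \tau)$, and if it is met, alternative 3 is achieved uniformly in every $G$. The plan is therefore to assume $\delta \leq c_4 (cR/(2^k\tau))^{2/j}$ and to build a single $L_0$ for which alternative 2 holds across all admissible $G$.

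With this small $\delta$, the constant $c_2 = q^i/(2^i\delta)$ is large and the rectangle $\F$ of Proposition \ref{F} is correspondingly wide. I would define $L_0 = L(A_0, B_0, C_0)$ to be an integer line through $P$ with $(A_0, B_0) \in \F$ and $H(A_0, B_0) < R^n$ chosen as small as possible. Existence and essential uniqueness come from a two-dimensional geometry-of-numbers argument on the sublattice $\Lambda_P \subseteq \Z^2$ of pairs $(A, B)$ with $Ap \equiv Br \pmod q$: the first successive minimum of $\Lambda_P$ inside $\F$ selects $L_0$, and the hypothesis $\tau \geq cR\,2^{-k}$ is exactly what guarantees $H(A_0, B_0) < R^n$.

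Now fix any $G$ with $|G| = \tau R^{-n}$. If $\#C(n, \ell, k, G, P) \leq 1$ alternative 1 holds, so assume the collection contains at least two lines and let $L = L(A, B, C) \in C(n, \ell, k, G, P)$ with $L \neq L_0$. Since both $L_0$ and $L$ pass through $P$, I would write
$$y(L) - y(L_0) = \frac{A B_0 - A_0 B}{q B B_0}\,(q\theta - p),$$
and since both lines also intersect $\Theta$ within $G$ (or within distance $|\Delta(L)|$ of it), Proposition \ref{F} applies to the pair $(L_0, L)$: one of the two must lie in $\F_\ell$, and by the minimality defining $L_0$ this must be $L_0$, while $(A, B) \notin \F$ for $L$, with the possible exception of one "second-shortest" lattice vector that plays the role of the exceptional line. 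For non-exceptional $L$, the non-membership $(A, B) \notin \F$ gives a lower bound on $B$, hence on $H(A, B)$, while $(A_0, B_0) \in \F_\ell$ gives an upper bound on $|A B_0 - A_0 B|$; substituting the formula for $|q\theta - p|$ in terms of $\delta$ reduces the desired inclusion $\Delta(L) \subseteq 2\Delta(L_0)$ to the algebraic inequality
$$\frac{|A B_0 - A_0 B|\,|q\theta - p|}{q B B_0} + \frac{c}{H(A, B)} \leq \frac{2c}{H(A_0, B_0)},$$
for which the constant $c_4 = 4^{-2/j}\,2^{-i}$ is precisely calibrated.

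The most delicate step is the verification of this last inequality: it requires a careful bookkeeping of the dyadic factor $2^k$ arising from the definition of $C(n, \ell, k)$, the exponent $2/j$ in (\ref{delta}), and the asymmetry between $\F$ and $\F_\ell$ used in Proposition \ref{F}. The uniqueness of the exceptional line is a separate short pigeonhole: if two distinct lines $L, L'$ in $C(n, \ell, k, G, P) \setminus \{L_0\}$ both failed the inclusion, applying Proposition \ref{F} to the pair $(L, L')$ would force one of them into $\F$, contradicting the characterization of $L_0$ as the unique lattice minimum in $\F$.
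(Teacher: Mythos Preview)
The paper does not actually prove this proposition: it is one of the four results explicitly quoted from \cite{Badziahin} with the disclaimer ``For the proofs see \cite{Badziahin}.'' So there is no in-paper argument to compare your attempt to; your sketch must stand or fall on its own.

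Your overall architecture --- observe that $\delta$ depends only on $(P,k,\tau)$, assume alternative~3 fails, build $L_0$ by geometry of numbers in the lattice $\Lambda_P$, and then verify alternative~2 --- is the right shape and matches the spirit of the Badziahin--Pollington--Velani argument. However, there are genuine gaps in the execution.

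The most serious is your application of Proposition~\ref{F} to the pair $(L_0,L)$. That proposition, through Proposition~\ref{5.2}, requires \emph{both} lines to lie in $C(n,\cdot,k,J,P)$, i.e.\ to have height in the window $[R^{n-1},R^n)$ and $B$-coefficient in the dyadic range indexed by $k$. Your $L_0$ is constructed as a lattice minimum and may well have $H(A_0,B_0)\ll R^{n-1}$; indeed the statement you are proving only asserts $H(A_0,B_0)<R^n$, with no lower bound. So you cannot invoke Proposition~\ref{F} for $(L_0,L)$ as written, and the conclusion ``by the minimality defining $L_0$ this must be $L_0$'' is unsupported. In \cite{Badziahin} the comparison between $\Delta(L)$ and $\Delta(L_0)$ is done directly from the formula $y(L)-y(L_0)=(AB_0-A_0B)(q\theta-p)/(qBB_0)$ together with explicit bounds on $A_0,B_0$ coming from their construction, not by appealing to Proposition~\ref{F}.

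A second gap is your pigeonhole for the exceptional line. You argue that if two lines $L,L'\in C(n,\ell,k,G,P)\setminus\{L_0\}$ both fail the inclusion, Proposition~\ref{F} forces one of them into $\F$, ``contradicting the characterization of $L_0$ as the unique lattice minimum in $\F$.'' But being the \emph{first successive minimum} does not make $(A_0,B_0)$ the \emph{only} point of $\Lambda_P$ in $\F$; the region $\F$ can contain many lattice points. What actually isolates the exceptional line in \cite{Badziahin} is a more careful analysis of the second successive minimum together with the parametrisation of $\Lambda_P\cap\F$ along the direction of $(A_0,B_0)$. Finally, the claim that ``$\tau\ge cR\,2^{-k}$ is exactly what guarantees $H(A_0,B_0)<R^n$'' is asserted but not argued; this step also requires the smallness of $\delta$ and a Minkowski-type bound, and should be written out.
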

\begin{proof}[Proof of Theorem~\ref{thm4}]
Set $n,\ell,k\geq 0$ and $J\in\j_{n-\ell}$. We wish to show that lines from $C(n,\ell,k,J)$ remove at most $R^{\beta-\varepsilon}$ intervals $I\in\i_{n+1}(J)$.
$$|\Delta(L)|=\frac{2c}{H(A,B)}\leq2cR^{-n+1}2^{-k}=c_12^{-k+1}R^{-n-\alpha},$$
so for any $I\in\i_{n+1}(J)$
\begin{equation}\label{dlRemove}
\frac{\mu(\Delta(L))}{\mu(I)}\leq \frac{b_2\left(c_12^{-k+1}R^{-n-\alpha}\right)^\beta}{b_1\left(c_1R^{-n-1}\right)^\beta}=
\frac{b_2}{b_1}\left(R^{1-\alpha}2^{-k+1}\right)^\beta.
\end{equation}
Then
\begin{equation}
K^* = \frac{b_2}{b_1}\left(R^{1-\alpha}2^{-k+1}\right)^\beta+2
\end{equation}
is an upper bound on the number of intervals that can be removed by a line $L\in C(n,\ell,k,J)$, and it satisfies
\begin{equation}\label{K1}
K^* \leq \frac{4b_2}{b_1}K^\beta,
\end{equation}
where
\begin{equation}\label{K}
K = \left\{
          \begin{array}{ll}
          R^{1-\alpha}2^{-k} & R^{1-\alpha}2^{-k}>1 \\
          1 & R^{1-\alpha}2^{-k}\leq1.
          \end{array}
    \right.
\end{equation}
Set $d=\lceil\frac{R^{1-\frac{2\varepsilon}{\beta}}}{K}\rceil$. Then $d\geq \frac{R^{1-\frac{2\varepsilon}{\beta}}}{K}$ so
$$\frac{|J|}{d}\leq\frac{Kc_1R^{\ell-n}}{R^{1-\frac{2\varepsilon}{\beta}}}\leq\tau R^{-n},$$
where
\begin{equation}\label{tau}
\tau=\left\{
          \begin{array}{ll}
          R^{\ell-\alpha+\frac{2\varepsilon}{\beta}}2^{-k}c_1  & R^{1-\alpha}2^{-k}>1 \\
          R^{\ell-1+\frac{2\varepsilon}{\beta}}c_1 & R^{1-\alpha}2^{-k}\leq1.
          \end{array}
     \right.
\end{equation}
Note that in both cases
$$\tau\geq cR2^{-k}.$$
By Proposition~\ref{thm3}, there exists a rational point $P$ such that $C(n,\ell,k,J)=C(n,\ell,k,J,P)$. Using the one-dimensionality of $J$, there exists a covering $\{G_i\}_{i=1}^{d^*}$ of $J\cap\C$ by intervals of length $\frac{|J|}{d}$ centered in $\C$, such that every $x\in J\cap\C$ is contained in at most two $G_i$'s. Since $\C=\supp(\mu)$ and $\mu$ satisfies a power law, $d^*$ must satisfy
$$d^* \leq \frac{2\mu(J\cap\C)}{\min_{1\leq i\leq d^*}G_i}+2 \leq 4\frac{b_2}{b_1}d^\beta.$$
Consider $C\left(n,\ell,k,G_i,P\right)$. Note that $|G_i|\leq\tau R^{-n}$, and that by definition of $K^*$, for each line $L$, $\dl$ intersects at most $K^*$ intervals from $\i_{n+1}(J)$. Therefore, if for every $1\leq i\leq {d^*}$, $C\left(n,\ell,k,G_i\right)$ consists of only $1$ line then by (\ref{K1}), they all remove at most
\begin{equation}\label{dK}
d^*K^* \leq 2\left(\frac{4b_2}{b_1}\right)^2R^{\beta-2\varepsilon}.
\end{equation}
We shall show that either we have to deal with only one more line, or otherwise we will have a useful bound on the number of lines in $C(n,\ell,k,J,P)$.
\textbf{Case 1, }$\mathbf{\boldsymbol\delta\leq c_4\left(\frac{cR}{2^k\boldsymbol\tau}\right)^{\frac{2}{j}}}$\textbf{.}  Viewing Proposition~\ref{5.4}, for each $C\left(n,\ell,k,G_i,P\right)$ there are at most two relevant lines, one exceptional line in each $C\left(n,\ell,k,G_i,P\right)$ and one line $L_0$ with $H\left(A_0,B_0\right)<R^n$ which is the same for every $i$ with $\# C\left(n,\ell,k,G_i,P\right)>1$.
If $L_0\in C\left(n_0\right)$ for some $n_0<n$, then intervals that intersect $\Delta\left(L_0\right)$ were obviously removed during the $\left(n_0+1\right)$'th step. Moreover, if there were some $J_1\in\j_{n_0+1},J_2\in\j_{n_0+2}\left(J_1\right)$ such that $J_2\cap 2\Delta\left(L_0\right)\neq\varnothing$ then $J_1^-\cap 2\Delta\left(L_0\right)\neq\varnothing$ and by (\ref{tmp}), $J_1\cap\Delta\left(L_0\right)\neq\varnothing$, but then $J_1$ was already removed in the $\left(n_0+1\right)$'th step. Thus $2\Delta\left(L_0\right)$ cannot remove any interval from $\j_{n_0+2}$, and since $n_0<n$, neither from $\j_{n+1}$.
If $L_0\in C(n)$ then by the same calculation as in (\ref{dlRemove}), $2\Delta\left(L_0\right)$ may remove at most $$\frac{b_2}{b_1}\left(4R^{1-\alpha}\right)^\beta+2$$
intervals.
Finally, in this case where $\delta\leq c_4\left(\frac{cR}{2^k\tau}\right)^{\frac{2}{j}}$, using (\ref{dK}) we get that there are at most
$$2\left(\frac{4b_2}{b_1}\right)^2R^{\beta-2\varepsilon} + \frac{8b_2}{b_1}R^{\beta(1-\alpha)}$$
subintervals $I\in\i_{n+1}(J)$ to be removed. Using (\ref{epsilonR3}) we get the estimation we wanted.
\newline
\noindent
\textbf{Case 2, }$\mathbf{\boldsymbol\delta > c_4\left(\frac{cR}{2^k\boldsymbol\tau}\right)^{\frac{2}{j}}}$\textbf{.}  Denote the number of lines in $C(n,\ell,k,J,P)$ by $M$. By Proposition~\ref{F},
$$M^* = \left\{
              \begin{array}{ll}
              \#\{L\in C(n,\ell,k,J,P):(A,B)\in\F\} & \ell=0 \\
              \#\{L\in C(n,\ell,k,J,P):(A,B)\in\F_\ell\} & \ell>0
        \end{array}
        \right.$$
satisfies $M\leq M^*+1.$ No two points $\left(A_1,B_1\right),\left(A_2,B_2\right)$ are on the same line through the origin, because if they were then the lines $L_1\left(A_1,B_1,C_1\right)$ and $L_2\left(A_2,B_2,C_2\right)$ would be parallel, contradicting that they intersect in $P$. It follows that these points create disjoint triangles with the origin $(0,0)$. Each triangle has area at least $\frac{q}{2}$, and the area of the union of triangles can't exceed the area of $\F$. By Definition (\ref{c2}) of $c_2$, $c_2=\frac{q^i}{2^i\delta}$, so by (\ref{FEq})
$$|\F| \leq 2c_2^{\frac{1}{i}} = q\delta^{-\frac{1}{i}},$$
For $\F_\ell$, $\ell>0$, by (\ref{FlEq}) and (\ref{c3}),
$$|\F_\ell| \leq 2c_2^{\frac{1}{i}}c_3^{1+i} = R^{\frac{\left(j-\lambda \ell(j+1)\right)(i+1)}{i}}q\delta^{-\frac{1}{i}}.$$
To ease calculations, use (\ref{ij}) and (\ref{lambda}) to write
$$\frac{\left(j-\lambda \ell(j+1)\right)(i+1)}{i}=\frac{j-i^2j-6\ell}{ij}-3\ell\leq-\frac{5\ell}{ij}.$$
Thus for any $\ell\geq 0$
\begin{equation}\label{M}
M\leq 2\delta^{-\frac{1}{i}}R^{-\frac{5\ell}{ij}}+2.
\end{equation}
We will show that $MK^*\leq R^{\beta-\varepsilon}$, and we are done with the proof of Theorem~\ref{thm4}.
Using (\ref{delta}) we have
\begin{equation}\label{deltai}
\delta^{-\frac{1}{i}}<c_4^{-\frac{1}{i}}\left(\frac{cR}{2^k\tau}\right)^{-\frac{2}{ji}}.
\end{equation}
By (\ref{tau})
\begin{center}
\begin{equation}\label{cRtau}
\frac{cR}{2^k\tau}\geq\left\{
                            \begin{array}{ll}
                            R^{-\ell-\frac{2\varepsilon}{\beta}}  & R^{1-\alpha}2^{-k}>1\\
                            R^{-\ell-\alpha-\frac{2\varepsilon}{\beta}} & R^{1-\alpha}2^{-k}\leq1.
                            \end{array}
                      \right.
\end{equation}
\end{center}
\textbf{Case 2.1, }$\mathbf{R^{1-\boldsymbol\alpha}2^{-k}>1}$\textbf{.}  By (\ref{M}), (\ref{deltai}), (\ref{cRtau}) and (\ref{c4})
\begin{equation}\label{Mlast}
M<2\cdot 4^{\frac{2}{ij}}\left(R^{\frac{4\varepsilon}{\beta}-3\ell}\right)^{\frac{1}{ij}}+2<
4^{\frac{2}{ij}+1}R^{\frac{4\varepsilon}{\beta ij}}.
\end{equation}
Using (\ref{K}) and $2^{-k}<1$,
\begin{equation}\label{Klast}
K^*\leq \frac{4b_2}{b_1}R^{\beta(1-\boldsymbol\alpha)}.
\end{equation}
Combine (\ref{Mlast}), (\ref{Klast}) and (\ref{c5}) to get,
$$MK^* < c_5R^{\beta - \alpha\beta + \frac{4\varepsilon}{\beta ij}}.$$
By (\ref{epsilonR4}),
$$MK^* < R^{\beta-\varepsilon}.$$
\newline
\noindent
\textbf{Case 2.2, }$\mathbf{R^{1-\boldsymbol\alpha}2^{-k}\leq1}$\textbf{.}  By (\ref{M}), (\ref{deltai}), (\ref{cRtau}) and (\ref{c4})
\begin{equation}\label{Mlast2}
M<2\cdot 4^{\frac{2}{ij}}\left(R^{\frac{4\varepsilon}{\beta}+2\alpha-3\ell}\right)^{\frac{1}{ij}}+2<
4^{\frac{2}{ij}+1}R^{\frac{\beta}{2} + \frac{4\varepsilon}{\beta ij}}.
\end{equation}
and by (\ref{K})
\begin{equation}\label{Klast2}
K^*\leq \frac{4b_2}{b_1}.
\end{equation}
Combine (\ref{Mlast2}), (\ref{Klast2}) and (\ref{c5}) to get,
$$MK^* < c_5R^{\frac{\beta}{2}+\frac{4\varepsilon}{\beta ij}}.$$
Note that because of (\ref{alpha}), $\frac{\beta}{2}+\frac{4\varepsilon}{\beta ij} < \beta - \beta\alpha + \frac{4\varepsilon}{\beta ij}$ so we are done.
\end{proof}


\appendix
\gdef\thesection{Appendix \Alph{section}}

\section{Measure On The Limit Set Of A Tree-Like Family}\label{treeLikeAppendix}
\begin{proof}[proof of Lemma~\ref{treeDim}]
We remark that $\gamma R\in\N$. Assume first that $n_0=0$, $\t_0=\{J_0\}$, $|J_0|=1$. For every $n\in\NEFES$ define $\nu_n$ by distributing it equally on each element of $\t_n$, i.e.,
$$\nu_n = \frac{\sum_{I\in\t_n}\l|_I}{(\gamma R)^n},$$
where $\l|_I$ is the restriction of the Lebesgue measure to the interval $I$, i.e., for any $A\subseteq \R$, $\l|_I(A)=\frac{\l(A\cap I)}{\l(I)}$.
$\nu_n$ is a probability measure because of (\ref{numOfChildren}). Thus, there is a weak-* convergent subsequence $\{\nu_{n_k}\}_{k\in\N}$. Denote its limit by $\nu$. Then,
$$\supp(\nu)=\bigcap_{k\in\N}\bigcup_{I\in\t_{n_k}}I.$$
We have $\forall I\in\t_{n+1}\;\exists J\in\t_n\;\;I\subseteq J$ so actually
\begin{equation}\label{treeBigcap}
\supp(\nu)=\bigcap_{n\in\N}\bigcup_{I\in\t_n}I.
\end{equation}
Also, for every $n\in\N$, $I\in\t_n$ and every $m\geq n$,
$\nu_m(I)=\nu_n(I)=(\gamma R)^{-n}=\left(R^{-n}\right)^\beta$ and thus
\begin{equation}\label{treeMeas}
\nu(I)=\left(R^{-n}\right)^\beta.
\end{equation}
Let $B(x,r)$ be any ball of radius $r$ and center $x\in\supp(\nu)$, and let $n$ be such that
$$R^{-n-1}\leq r\leq R^{-n}.$$
For the left hand inequality in Definition (\ref{powerLawEq}), $x\in\supp(\nu)$ so by (\ref{treeBigcap}) there exists $I\in\t_{n+1}$ such that $x\in I$, therefore $I\subseteq B(x,r)$, so by (\ref{treeMeas})
$$\nu(B(x,r))\geq \left(R^{-n-1}\right)^\beta\geq\frac{1}{R^\beta}r^\beta.$$
For the right hand inequality in Definition (\ref{powerLawEq}),
$$\# \{I\in\t_n:I\cap B(x,r)\neq\varnothing\}\leq 3\;\;\Rightarrow\;\;\nu(B(x,r))\leq 3\left(R^{-n}\right)^\beta,$$
so $\nu(B(x,r))\leq 3R^\beta r^\beta$.
Finally $\nu$ satisfies the definition of power law (\ref{powerLawEq}) with $b_1=\frac{1}{R^\beta}$ and $b_2 = 3R^\beta$.
In the general case where $n_0\neq0$, we start the construction from $n\geq n_0$, and again define $\nu_n$ by distributing equally the Lebesgue measure of each element in $\t_{n_0}$
$$\nu_n = \frac{\sum_{I\in\t_n}a(I)\l|_I}{A(\gamma R)^{n}}.$$
where $a(I)=|J|$ for the unique $J\in\t_{n_0}$ such that $I\subseteq J$, and $$A = (\gamma R)^{-n_0}\sum_{J\in\t_{n_0}}|J|.$$
Define $\nu$ as above. (\ref{treeBigcap}) is satisfied, and instead of (\ref{treeMeas}) we have
\begin{equation}\label{treeMeasGen}
\nu(I)=\frac{a(I)}{A}\left(R^{-n}\right)^\beta.
\end{equation}
Let $B(x,r)$ be any ball of radius $r$ and center $x\in\supp(\nu)$, and let $n$ be such that
$$R^{-n-1}\leq r\leq R^{-n}.$$
On one hand, $x\in\supp(\nu)$ so by (\ref{treeBigcap}) there exists $J\in\t_{n+1}$ such that $x\in J$, therefore $J\subseteq B(x,r)$, so by (\ref{treeMeasGen})
$$\nu(B(x,r))\geq \frac{a(J)}{A}\left(R^{-n-1}\right)^\beta\geq \frac{a(J)}{A}\frac{1}{R^\beta}r^\beta.$$
On the other hand,
$$\# \{J\in\t_n:J\cap B(x,r)\neq\varnothing\}\leq 3\;\;\Rightarrow\;\;\nu(B(x,r))\leq 3\frac{\max_{J\in\t_{n_0}}|J|}{A}\left(R^{-n}\right)^\beta,$$
so $\nu(B(x,r))\leq 3\frac{\max_{J\in\t_{n_0}}|J|}{A}R^\beta r^\beta$.
Finally $\nu$ satisfies the definition of power law (\ref{powerLawEq}) with $b_1=\frac{\min_{J\in\t_{n_0}}|J|}{A}\frac{1}{R^\beta}$ and $b_2 = 3\frac{\max_{J\in\t_{n_0}}|J|}{A}R^\beta$.
\end{proof}

\section{$\BAij$ Is Absolutely Winning On $\C$ (joint with Barak Weiss)}\label{abswin}
The work described in the body of this paper was done prior to the appearance of Jinpeng An's work \cite{JinpengAn} on Arxiv. In this appendix we explain how An's work can be used to obtain a strengthening of the results of this paper. In particular, we prove a result about the Hausdorff dimension.
\begin{thm}\label{iOurThm}
Let $\C\subseteq\Theta$ be the support of a measure satisfying a power law, and let $\{\left(i_t,j_t\right)\}_{t\in \N}$ with $\left(i_t,j_t\right)$ as in (\ref{ij}). Then
$$\dim\left(\mathbf{C}\cap\bigcap_{t\in\N} \BA\left(i_t,j_t\right)\right)=\dim(\mathbf{C}).$$
\end{thm}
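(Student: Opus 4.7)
The plan is to combine three ingredients: (i) the absolute winning property, stated in this appendix, that $\BAij \cap \C$ is absolutely winning on $\C$ whenever it is nonempty; (ii) stability of absolutely winning sets on $\C$ under countable intersections; and (iii) a dimension bound saying that any absolutely winning subset of $\C$ has Hausdorff dimension equal to $\dim(\C)$. Granting (i)--(iii), the theorem is immediate: Theorem~\ref{ourThm} already guarantees that each $\BA(i_t,j_t) \cap \C$ is nonempty, so by (i) each is absolutely winning on $\C$; by (ii) the countable intersection $\C \cap \bigcap_{t \in \N} \BA(i_t, j_t)$ is then absolutely winning on $\C$; and by (iii) this intersection has dimension $\dim(\C)$, while the trivial reverse inequality closes the argument.

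For (i), I would follow Jinpeng An's method \cite{JinpengAn} but play McMullen's absolute game on $\C$ rather than on $\Theta$, using as the geometric core the same counting estimates of Sections~\ref{mainTheorem} and~\ref{mainKey}. At each stage Alice needs to remove the intervals in $\i_{n+1}(J)$ that meet some $\Delta(L)$ with $L \in C(n,\ell)$; Theorem~\ref{main_thm} bounds the number of such bad intervals by $R^{\beta-\varepsilon}$, strictly less than the $\approx R^\beta$ children provided by the lower bound (\ref{jnNum}). This margin, which already drove Lemma~\ref{ubique}, is exactly what lets Alice declare a valid absolute-game move on $\C$ at each round while leaving Bob at least one legal response. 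For (ii), the standard diagonal interleaving argument works verbatim on $\C$: fix winning strategies $\sigma_t$ for each $\BA(i_t,j_t) \cap \C$, have Alice cycle through them (playing $\sigma_t$ on an infinite subsequence of rounds while trivially passing on the others), and use the geometric decay of Bob's radii to ensure the unique limit point lies in every $\BA(i_t, j_t) \cap \C$.

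For (iii), one builds, inside Alice's winning region, a tree-like family satisfying the hypotheses of Lemma~\ref{treeDim}, with branching number approaching $R^\beta$ as $R \to \infty$; this yields, for every $\beta' < \beta = \dim(\C)$, a probability measure supported in the winning set and satisfying a power law with exponent at least $\beta'$, and then the easy direction of Frostman's lemma finishes the dimension estimate, paralleling the deduction of Theorem~\ref{dim} already carried out in Section~\ref{mainTheorem}. The main obstacle is step (i): one must verify in detail that An's strategy, designed for the ambient set $\Theta$, can be phrased as a strategy in the absolute game on the fractal $\C$, with all line-counting and area estimates rerun using the power-law bounds on $\mu$ in place of Lebesgue measure. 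The bookkeeping is nontrivial because Alice's moves must be compatible with the tree structure $\{\jn\}$, but the quantitative gap $R^{\beta-\varepsilon}$ versus $R^\beta$ in Theorem~\ref{main_thm} is precisely the slack that makes the adaptation work.
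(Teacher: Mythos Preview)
Your overall architecture (absolute winning $\Rightarrow$ stability under countable intersection $\Rightarrow$ full dimension) matches the paper's, but you diverge from it at the crucial step (i). You propose to prove that $\BAij\cap\C$ is absolutely winning \emph{on $\C$} by rerunning An's strategy with the fractal counting estimate of Theorem~\ref{main_thm}. The paper takes the opposite route: using An's Theorem~\ref{jinpengLemma} on the plain interval $\Theta$, it shows (Proposition~\ref{BAisHAW}) that $\BAij\cap\Theta$ is absolutely winning \emph{on $\R$}, with no fractal input at all; the countable intersection is then taken on $\R$ (Proposition~\ref{infiniteIntersection}), and only afterwards is everything transferred to $\C$ in one stroke via the inheritance property Proposition~\ref{abs} (an absolutely winning set on $\R$ is automatically absolutely winning on every diffuse subset, and the support of a power-law measure is diffuse). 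Proposition~\ref{absSchmidt} and Proposition~\ref{fullDimension} then finish. In particular the appendix never touches Theorem~\ref{main_thm}, Lemma~\ref{ubique}, or the $\{\jn\}$ machinery, and never needs the preliminary nonemptiness from Theorem~\ref{ourThm}.

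Your route is plausible but costs more and buys nothing here. An's key lemma gives a \emph{fixed constant} (five) of children to delete at each level of any regular tree in $\Theta$, which is exactly what lets it be packaged as a bounded-$N$ absolute-game strategy (Lemma~\ref{multiple}). By contrast, Theorem~\ref{main_thm} gives, for each ancestral depth $\ell$, a block of up to $R^{\beta-\varepsilon}$ deletions at level $n{+}1$; converting that into an Alice strategy with a fixed number of moves per round is not automatic and would essentially require reproving An's finer Proposition~3.1 on the fractal rather than merely citing the BPV-style margin. The paper's point is precisely that the inheritance property makes this entire adaptation unnecessary.
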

\begin{rem*}
Under the weaker assumption that $\mu$ is $\gamma$ absolutely decaying (see \cite{BFKRW}, \S5 for the definition) the same argument gives the conclusion $$\dim\left(\C\cap\bigcap_{t\in\N} \BA\left(i_t,j_t\right)\right)\geq\gamma.$$
\end{rem*}
\noindent
To prove Theorem~\ref{iOurThm}, we use the notion of an \emph{absolute winning} set, as defined by McMullen in \cite{Mc} and generalized to the notion of a \emph{hyperplane absolute winning} (HAW) in \cite{BFKRW}. Let $X\subseteq\R$ and let $\beta>0$. The $\beta$-absolute game is defined as follows. Bob starts by choosing a closed ball $B_0 = B\left(x_0,r_0\right)$ with $x_0\in X$ and $r_0>0$. The game continues in the $n$'th step, $n\geq1$, with Alice choosing a $\beta_n$-neighborhood $A_n$ of a point in $\R$, where $\beta_n \leq \beta r_{n-1}$, and Bob choosing a closed ball
$$B_n = B\left(x_n,r_n\right) \subseteq B_{n-1} \setminus A_n,$$
with $x_n\in X$ and $r_n \geq \beta r_{n-1}$. A set $S\subseteq X$ is \emph{$\beta$-absolute winning on $X$} if Alice can force $\bigcap_{n=0}^\infty B_n\cap S \neq \emptyset$. One advantage of the absolute winning property is that it passes to certain subsets:
\begin{defn}(cf. \cite{BFKRW}, Definition 4.2)\label{hyperplanediffuse}
A closed set $K \subseteq \R$ is said to be \emph{$\beta$-diffuse}, $0 < \beta < 1$, if there exists $\rho_K>0$ such that for any $0 < \rho < \rho_K$, $x\in K$ and $x'\in\R$
$$\left(K\cap B(x, \rho)\right)\setminus B(x',\beta\rho)\neq\varnothing.$$
We say that $K$ is \emph{diffuse} if it is $\beta$-diffuse for some $0 < \beta < 1$.
\end{defn}
\noindent
For diffuse sets we define
$$\beta_0(K)=\sup\left\{\frac{\beta}{\beta+2}:K \; {\rm is } \; \beta{\rm-diffuse}\right\}.$$
It is clear that $\beta_0(\R)=\frac{1}{3}$. Also, if $\beta>\beta_0(K)$, it is possible that Bob will not have an available move to make, and our game is ill-defined. We will consider the absolute game played on a diffuse set $K$, where Bob first chooses a $0<\beta<\beta_0(K)$ and the game continues as a $\beta$-absolute game on $K$, and say that $S$ is absolute winning on $K$ if it is $\beta$-absolute winning on $K$ for every $0<\beta<\beta_0(K)$. It is easy to see that this is equivalent to requiring that for any $\varepsilon>0$ there is $0<\beta<\min\{\varepsilon, \beta_0(K)\}$ such that $S$ is $\beta$-absolute winning on $K$.
\begin{prop}[\cite{BFKRW}, Proposition 4.9]\label{abs}
Assume $S\subseteq \R$ is absolute winning on $\R$ and let $K\subseteq\R$ be diffuse. Then $S\cap K$ is absolute winning on $K$.
\end{prop}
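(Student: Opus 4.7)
The plan is to convert Alice's winning strategy on $\R$ into a winning strategy on $K$ by direct simulation. Fix $\beta < \beta_0(K)$; by definition of $\beta_0(K)$ we may pick $\beta^* \in (0,1)$ with $K$ being $\beta^*$-diffuse and $\beta < \beta^*/(\beta^*+2)$. The latter is equivalent to $\beta^*(1-\beta) > 2\beta$, so we can fix $\gamma \in (0, \beta]$ with $\beta^*(1-\beta) > \beta + \gamma$. Since $S$ is absolute winning on $\R$, I may take $\gamma$ smaller if needed so that Alice has a winning strategy $\sigma$ in the $\gamma$-absolute game on $\R$.

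Alice's strategy in the $\beta$-absolute game on $K$ is to run a shadow $\gamma$-absolute game on $\R$ in parallel. Whenever Bob plays $B_n = B(x_n, r_n)$ in the real game on $K$ (necessarily with $x_n \in K$ and $r_n \geq \beta r_{n-1}$), Alice treats $B_n$ as his move in the shadow game; this is legal because $\beta \geq \gamma$. She consults $\sigma$ for her response $A_n$, which is legal in both games since its radius $\beta_n$ is at most $\gamma r_{n-1} \leq \beta r_{n-1}$, and plays the same $A_n$ in the real game.

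The key verification is that Bob always has a legal response in the $K$-game. Given $B_{n-1} = B(x_{n-1}, r_{n-1})$ with $x_{n-1} \in K$ and $A_n = B(y_n, \beta_n)$, I apply $\beta^*$-diffuseness at $x_{n-1}$ with radius $\rho = (1-\beta) r_{n-1}$ and excluded point $y_n$: there exists $x_n \in K \cap B(x_{n-1}, \rho)$ with $|x_n - y_n| > \beta^*(1-\beta) r_{n-1} > \beta r_{n-1} + \beta_n$. Then $B(x_n, \beta r_{n-1})$ sits inside $B_{n-1}$ and is disjoint from $A_n$, giving Bob the legal move $B(x_n, \beta r_{n-1})$.

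Finally, since $\sigma$ is winning on $\R$ and the shadow game is a legitimate $\gamma$-absolute play on $\R$, we have $\bigcap_n B_n \cap S \neq \varnothing$. I may additionally enlarge each of Alice's moves $A_n$ to the maximum allowed radius $\gamma r_{n-1}$ about the same center (which can only shrink Bob's legal moves and hence preserves $\sigma$'s winning property); this forces $r_n \leq (1-\gamma) r_{n-1}$, so $r_n \to 0$ and $\bigcap_n B_n = \{x_\infty\}$ with $x_\infty = \lim_n x_n \in K$ (since $K$ is closed and each $x_n \in K$). Hence $x_\infty \in S \cap K$, so $S \cap K$ is $\beta$-absolute winning on $K$, and letting $\beta \nearrow \beta_0(K)$ concludes the argument. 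The main delicacy is matching the diffuseness of $K$ to the scale $\gamma$ of the simulated game; the factor $2$ in the definition of $\beta_0(K)$ is precisely what provides the slack $\beta^*(1-\beta) > 2\beta \geq \beta + \gamma$.
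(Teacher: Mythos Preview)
The paper does not supply a proof of this proposition; it is quoted from \cite{BFKRW}. Your simulation argument---run a $\gamma$-absolute game on $\R$ as a shadow of the $\beta$-game on $K$, using the slack in the definition of $\beta_0(K)$ to guarantee Bob has legal replies---is exactly the standard proof given there, and the core of your write-up is correct.

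One step does not go through as written. You claim that enlarging each $A_n$ to radius $\gamma r_{n-1}$ about the same center ``forces $r_n \leq (1-\gamma) r_{n-1}$''. This fails when the center $y_n$ of $A_n$ lies outside $B_{n-1}$: then $B_{n-1}\setminus A_n$ can be a single interval of length arbitrarily close to $2r_{n-1}$, so Bob may take $r_n$ arbitrarily close to $r_{n-1}$, and you lose $r_n\to 0$. Without $r_n\to 0$ you cannot conclude that the point of $\bigcap_n B_n$ furnished by $\sigma$ (which lies in $S$) coincides with $\lim x_n$ (which lies in $K$). The repair is cheap: a winning strategy may be assumed without loss of generality to place $y_n\in B_{n-1}$, since replacing $y_n$ by its nearest point in $B_{n-1}$ only enlarges $A_n\cap B_{n-1}$ and hence restricts Bob further. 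With $y_n\in B_{n-1}$ and $A_n$ of radius $\gamma r_{n-1}$, one has $|A_n\cap B_{n-1}|\geq \gamma r_{n-1}$, whence $r_n\leq (1-\gamma/2)r_{n-1}$ and $r_n\to 0$ follows. A second, more routine omission: your appeal to $\beta^*$-diffuseness at radius $\rho=(1-\beta)r_{n-1}$ needs $\rho<\rho_K$, which may fail for the first few $n$; this is harmless under the usual convention that Alice wins if Bob is ever stuck, or by having Bob's opening ball be small enough.
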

\noindent
As an example of a diffuse set one can take the support of a measure satisfying a power law. Two additional advantages of using games, and in particular the absolute game, are the infinite intersection and the full Hausdorff dimension properties.
\begin{prop}[\cite{Mc} page 3, or \cite{BFKRW} Proposition 2.3(b)]\label{infiniteIntersection}
For every $n\in\N$, assume $S_n\subseteq R$ is absolute winning on $\R$. Then $\bigcap_{n\in\N}S_n$ is absolute winning on $\R$.
\end{prop}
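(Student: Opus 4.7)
My plan is a diagonal interleaving of Alice's winning strategies. Fix $\beta\in(0,\beta_0(\R))=(0,1/3)$; I want to exhibit a winning strategy for Alice in the $\beta$-absolute game for $\bigcap_{n\in\N}S_n$. I would first partition $\N=\bigsqcup_{n\in\N}I_n$ into infinite subsets whose consecutive gaps are bounded, say by $d_n$ on $I_n$ (one concrete choice: $I_n=\{k\in\N:v_2(k)=n-1\}$, giving $d_n=2^n$). Since $\beta^{d_n}<\beta_0(\R)$ and $S_n$ is absolute winning on $\R$, I may fix a winning strategy $\sigma_n$ for Alice in the \emph{$\beta^{d_n}$-absolute} game for $S_n$; here I use crucially that ``absolute winning'' means winning at every admissible parameter, so I can take this parameter as small as I need.

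Next I would build the combined strategy $\sigma$ by letting $\sigma_n$ govern Alice's play on the turns in $I_n$. Enumerate $I_n$ as $k_{n,1}<k_{n,2}<\cdots$ and introduce a virtual game $G_n$ whose $p$-th round is $(A^{(n)}_p,B^{(n)}_p):=(A_{k_{n,p}},B_{k_{n,p}})$, extracted from the real play. On real turn $k=k_{n,m}$ Alice identifies the unique $n$ with $k\in I_n$, feeds the history of $G_n$ so far into $\sigma_n$, and plays its output as $A_k$. Two compatibility checks are needed: that $G_n$ is a legal $\beta^{d_n}$-absolute game (so $\sigma_n$ applies and wins) and that the prescribed $A_k$ is legal in the real $\beta$-absolute game. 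Nesting and the disjointness $B^{(n)}_p\cap A^{(n)}_p=\varnothing$ in $G_n$ are inherited from the real play, and the radii satisfy
$$r^{(n)}_p=r_{k_{n,p}}\ge\beta^{k_{n,p}-k_{n,p-1}}r_{k_{n,p-1}}\ge\beta^{d_n}r^{(n)}_{p-1}.$$
If $\sigma_n$ demands a neighborhood of radius $\beta^{(n)}_m\le\beta^{d_n}r^{(n)}_{m-1}$, then the real-game bound $r_{k-1}\ge\beta^{k-1-k_{n,m-1}}r^{(n)}_{m-1}\ge\beta^{d_n-1}r^{(n)}_{m-1}$ yields $\beta^{(n)}_m\le\beta r_{k-1}$, as required for a legal Alice move.

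Once these checks are in place, the conclusion comes quickly: $\sigma_n$ winning means $\bigcap_p B^{(n)}_p\cap S_n\ne\varnothing$ for every $n$, and since $\{B^{(n)}_p\}_p$ is a subsequence of the nested sequence $\{B_k\}_k$ their intersections coincide, so $\bigcap_k B_k\cap S_n\ne\varnothing$ for every $n$. Provided $r_k\to 0$---which Alice may always enforce by taking maximal-radius neighborhoods, and which is standard in this setting---the common intersection is a single point, which then lies in every $S_n$ and hence in $\bigcap_n S_n$, giving Alice the win. The main obstacle I anticipate is precisely the radius bookkeeping that links the real and virtual games; it is resolved by the initial trick of drawing $\sigma_n$ from the stronger $\beta^{d_n}$-absolute game rather than from the $\beta$-absolute game itself, which is possible only because $S_n$ is assumed absolute winning at \emph{every} parameter.
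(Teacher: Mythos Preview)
The paper does not supply a proof of this proposition; it simply cites \cite{Mc} and \cite{BFKRW}. Your interleaving construction is essentially the standard argument found in those references, and the paper's own Lemma~\ref{multiple} (the equivalence of $N$-absolute and absolute winning) uses exactly the same device of running a strategy for the smaller parameter $\beta^N$ along a subsequence of turns. Your radius bookkeeping linking the real and virtual games is correct, and you correctly isolate the key feature of \emph{absolute} winning that makes the argument work: the freedom to draw $\sigma_n$ from the $\beta^{d_n}$-game rather than the $\beta$-game.

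One point deserves more care. The assertion that Alice can force $r_k\to 0$ ``by taking maximal-radius neighborhoods'' is not quite right as stated. Enlarging Alice's deletion to radius $\beta r_{k-1}$ does not force Bob's next ball to shrink by a definite factor when the center of the deletion lies near or outside the boundary of $B_{k-1}$, and the center prescribed by $\sigma_n$ is only guaranteed to be a response to the larger ball $B^{(n)}_{m-1}=B_{k_{n,m-1}}\supseteq B_{k-1}$, so it need not lie in $B_{k-1}$ at all. The clean fix is to reserve one additional infinite set $I_0$ of turns (for instance shift your partition to $I_n=\{k\in\N:v_2(k)=n\}$ for $n\ge 0$, so $I_0$ consists of the odd integers); on each turn $k\in I_0$ Alice removes the concentric ball $B(x_{k-1},\beta r_{k-1})$, which leaves two intervals of length $(1-\beta)r_{k-1}$ and hence forces $r_k\le\tfrac{1-\beta}{2}\,r_{k-1}$. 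Since $I_0$ is infinite, this gives $r_k\to 0$. With this small amendment (and the corresponding $d_n=2^{n+1}$), the rest of your argument goes through verbatim.
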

\noindent
To get the full Hausdorff dimension of the intersection with nice fractals, we note that being absolute winning implies being winning in the original sense due to Schmidt \cite{S}. Specifically:
\begin{prop}[\cite{BFKRW}, Proposition 4.7]\label{absSchmidt}
Let $K\subseteq\R$ be diffuse and assume $S\subseteq K$ is absolute winning on $K$. Then $S$ is winning on $K$.
\end{prop}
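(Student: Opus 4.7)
The plan is to convert Alice's absolute winning strategy into a Schmidt winning strategy by running the two games in parallel on the same sequence of Bob's balls. Let $K \subseteq \R$ be $\gamma$-diffuse (with $\gamma > 0$) and suppose $S \subseteq K$ is $\beta$-absolute winning on $K$ for every $\beta < \beta_0(K)$. We will show $S$ is $\alpha$-winning on $K$ in the Schmidt sense for any fixed $\alpha$ with $0 < \alpha < \gamma/(\gamma+1)$. Once Bob declares his Schmidt parameter $\beta_S \in (0,1)$, Alice picks
\[
\beta < \min\bigl\{\gamma(1-\alpha) - \alpha,\ \alpha\beta_S,\ \beta_0(K)\bigr\},
\]
and fixes a $\beta$-absolute winning strategy $\sigma$ for $S$ on $K$. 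The three constraints serve distinct purposes: the first forces Alice's Schmidt ball to miss the absolute-game hole, the second guarantees that each Schmidt round is a legal $\beta$-absolute round (so the virtual play remains valid for every $\beta_S$ Bob may pick), and the third merely ensures the absolute game itself is available.

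Alice couples the games by declaring each Bob Schmidt ball $B_n = B(x_n,r_n)$ with $x_n \in K$ to be the $n$-th virtual absolute-game ball $\tilde B_n$. At round $n$, she feeds the absolute-game history into $\sigma$ to obtain the hole $\tilde A_{n+1} = B(y_{n+1}, \rho_{n+1})$ with $\rho_{n+1} \leq \beta r_n$. Applying $\gamma$-diffuseness of $K$ at scale $(1-\alpha)r_n$ with forbidden center $y_{n+1}$, she chooses $z_{n+1} \in K \cap B(x_n,(1-\alpha)r_n)$ with $|z_{n+1} - y_{n+1}| > \gamma(1-\alpha)r_n$, and plays the Schmidt move $A_{n+1} := B(z_{n+1}, \alpha r_n)$. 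The inclusion $z_{n+1} \in B(x_n,(1-\alpha)r_n)$ yields $A_{n+1} \subseteq B_n$, while
\[
\operatorname{dist}(A_{n+1},\tilde A_{n+1}) \geq \gamma(1-\alpha)r_n - \alpha r_n - \beta r_n > 0
\]
by the choice of $\beta$, so $A_{n+1}\cap \tilde A_{n+1} = \emptyset$. Bob must then pick $B_{n+1} \subseteq A_{n+1}$ with $x_{n+1} \in K$ and $r_{n+1} = \alpha\beta_S r_n \geq \beta r_n$; hence $B_{n+1} \subseteq B_n \setminus \tilde A_{n+1}$ with radius at least $\beta r_n$, which is precisely a legal $\beta$-absolute response consistent with $\sigma$.

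By induction the common sequence $(B_n)$ is a full play of the $\beta$-absolute game in which Alice follows $\sigma$, so $\bigcap_n B_n \cap S \neq \emptyset$ and Alice wins the Schmidt game as well; since this works for every $\beta_S$, $S$ is $\alpha$-winning on $K$. The main obstacle is the two-sided geometric constraint on the center $z_{n+1}$: it must lie inside $B(x_n,(1-\alpha)r_n)$ so that the surrounding ball of radius $\alpha r_n$ fits inside Bob's ball, and simultaneously at distance at least $\alpha r_n + \rho_{n+1}$ from $y_{n+1}$ so that this surrounding ball misses the hole. Diffuseness is exactly what provides such a point, and reconciling the two requirements is what forces the quantitative bounds $\alpha < \gamma/(\gamma+1)$ and $\beta < \gamma(1-\alpha) - \alpha$ built into the parameter choice.
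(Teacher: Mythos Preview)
The paper does not give its own proof of this proposition; it is quoted as \cite{BFKRW}, Proposition~4.7, and used as a black box. So there is no in-paper argument to compare against.

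Your argument is essentially correct and is the standard one: couple the Schmidt game to the absolute game by letting Bob's Schmidt balls double as his absolute-game balls, use the absolute strategy to produce a hole, and then invoke diffuseness to place Alice's Schmidt ball away from that hole. The parameter bookkeeping ($\alpha<\gamma/(\gamma+1)$ so that $\gamma(1-\alpha)-\alpha>0$, and $\beta<\min\{\gamma(1-\alpha)-\alpha,\ \alpha\beta_S,\ \beta_0(K)\}$) is exactly what is needed, and the verification that $B_{n+1}$ is a legal $\beta$-absolute response is clean.

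One small omission: the diffuseness hypothesis in Definition~\ref{hyperplanediffuse} only applies for radii $\rho<\rho_K$, so your appeal to it at scale $(1-\alpha)r_n$ is only justified once $r_n<\rho_K/(1-\alpha)$. You should have Alice play a harmless move (say $A_{n+1}=B(x_n,\alpha r_n)$, which is centered in $K$ and contained in $B_n$) for the finitely many initial rounds until the radii drop below this threshold, and only then begin the coupling. This is routine and does not affect the substance of the proof.
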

\begin{prop}[\cite{Fishman}, Theorem 5.1]\label{fullDimension}
Assume $K\subseteq\R$ is the support of a measure satisfying a power law, and $S\subseteq K$ is winning on $K$. Then, dim(S)=dim(K).
\end{prop}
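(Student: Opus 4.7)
The upper bound $\dim(S)\le\dim(K)=\beta$ is immediate from $S\subseteq K$ together with the power-law hypothesis: $\mu$ itself yields $\dim(K)\le\beta$ by the standard $5r$-covering argument, and $\dim(K)\ge\beta$ from the mass distribution principle. So the content of the proposition is the lower bound $\dim(S)\ge\beta$. Fix $\beta'<\beta$; the plan is to construct a compact subset of $S$ that supports a measure satisfying a power law of exponent exceeding $\beta'$, and then invoke the easy direction of Frostman's lemma as in the proof of Theorem~\ref{dim}.

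Since $S$ is winning on $K$, fix $\alpha\in(0,1)$ such that for every $\beta_g\in(0,1)$ Alice has a winning strategy $\sigma$ in Schmidt's $(\alpha,\beta_g)$-game played on $K$. Choose $\beta_g$ small (to be pinned down at the end) and fix such a $\sigma$. I simulate Bob's side by a branching play. Starting from an initial closed ball $B_0=B(x_0,r_0)$ with $x_0\in K$, at each stage $n$ Alice (following $\sigma$) responds with $A_n=B(x'_n,\alpha r_{n-1})$ for some $x'_n\in K$. Rather than committing to a single Bob move, I collect all balls $B(x,\beta_g\alpha r_{n-1})$ with $x\in K\cap B(x'_n,(1-\beta_g)\alpha r_{n-1})$, so that each such ball sits entirely inside $A_n$. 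By the $5r$-covering lemma together with the two sides of the power law, one extracts a pairwise disjoint sub-collection of size at least
$$
N \;:=\; c_0\,\beta_g^{-\beta},
$$
where $c_0=c_0(\alpha,b_1,b_2)>0$. These become the children of $B_{n-1}$ in the tree.

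Each infinite descending branch of this tree is a legal Bob-play of the $(\alpha,\beta_g)$-game, so $\sigma$ forces the unique limit point of that branch to lie in $S$. The collection is a tree-like family in the sense of Definition~\ref{treeLikeDef}, and after the first step every parent has exactly $\lceil N\rceil$ children with radius ratio $R:=(\alpha\beta_g)^{-1}$ between consecutive levels. Lemma~\ref{treeDim} then produces a measure $\nu$ supported inside $S$ satisfying a power law with exponent
$$
\log_R\lceil N\rceil \;\ge\; \frac{\log c_0+\beta\log(1/\beta_g)}{\log(1/\alpha)+\log(1/\beta_g)},
$$
which tends to $\beta$ as $\beta_g\to 0^+$ with $\alpha$ held fixed. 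For $\beta_g$ small enough the exponent exceeds $\beta'$, so Frostman gives $\dim(S)\ge\beta'$; letting $\beta'\uparrow\beta$ completes the proof.

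The only delicate step I anticipate is producing the lower bound $N\ge c_0\beta_g^{-\beta}$ on the number of disjoint admissible sub-balls: one must both restrict centers to the concentric ball of radius $(1-\beta_g)\alpha r_{n-1}$ (so the children actually sit inside $A_n$) and combine the lower bound $\mu(B(x'_n,(1-\beta_g)\alpha r_{n-1}))\ge b_1((1-\beta_g)\alpha r_{n-1})^\beta$ with the upper bound $\mu(5B)\le b_2(5\beta_g\alpha r_{n-1})^\beta$ on each child. Everything else---the fact that branching does not interfere with Alice's strategy (each branch is independently a legal play, and $\sigma$ wins against each), the verification of the tree-like axioms, and the appeal to Lemma~\ref{treeDim}---is standard.
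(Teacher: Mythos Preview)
The paper does not prove this proposition at all; it is quoted as Theorem~5.1 of \cite{Fishman} and used as a black box in the proof of Theorem~\ref{iOurThm}. So there is nothing in the paper to compare against.

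Your argument is the standard ``splitting Bob'' proof and is essentially correct. A couple of small points. First, the lower bound $N$ on the number of disjoint admissible children is only a lower bound, and it need not be an integer; you should take $\lfloor N\rfloor$ children from each parent rather than $\lceil N\rceil$. This changes nothing in the limit computation. Second, you are implicitly using that in the Schmidt game on $K$ Alice's centre $x'_n$ lies in $K=\supp(\mu)$, so that the power-law lower bound applies to $B(x'_n,(1-\beta_g)\alpha r_{n-1})$; this is indeed the convention in \cite{Fishman}, but it is worth saying so explicitly. With those cosmetic fixes, the invocation of Lemma~\ref{treeDim} with $R=(\alpha\beta_g)^{-1}$ and $\gamma R=\lfloor N\rfloor$ is legitimate, the limit set of the tree lies in $S$ because every branch is a legal play against which $\sigma$ wins, and the exponent
\[
\log_R\lfloor N\rfloor \;\ge\; \frac{\log(c_0\beta_g^{-\beta}-1)}{\log(1/\alpha)+\log(1/\beta_g)} \xrightarrow[\beta_g\to 0]{} \beta
\]
gives the desired conclusion via Frostman.
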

\noindent
We will need a variant of the absolute game.
\begin{defn}\label{multipleDefn}
Fix an integer $N\in\N$ and change only the following: in every step $n\geq 1$ allow $A_n$ to be the union of up to $N$ neighborhoods of points in $\R$ of radius not bigger than $\beta r_{n-1}$. Call this game $(N,\beta)$-absolute game. A set $S\subseteq K$ which is winning for this game played on $K$ will be called \emph{$(N,\beta)$-absolute winning on $K$}.
\end{defn}
\begin{defn}\label{Nhyperplanediffuse}
A closed set $K \subseteq \R$ is said to be \emph{$(N,\beta)$-diffuse}, $0 < \beta < 1$, if there exists $\rho_K>0$ such that for any $0 < \rho < \rho_K$, $x\in K$ and $x_1,...,x_N\in R$
$$\left(K\cap B(x, \rho)\right) \setminus \bigcup_{k=1}^NB\left(x_k,(\beta\rho)\right)\neq\varnothing.$$
We say that $K$ is \emph{$N$-diffuse} if it is $(N,\beta)$-diffuse for some $0 < \beta < 1$ (since $N\in\N$ and $\beta<1$ there is no ambiguity in this notation).
\end{defn}
\noindent
For $N$-diffuse sets we define
$$\beta_0(K,N)=\sup\left\{\frac{\beta}{\beta+2}:K \; {\rm is } \; (N,\beta){\rm-diffuse}\right\}.$$
As before, we will consider the $N$-absolute game played on a $N$-diffuse set $K$, where Bob first chooses a $0<\beta<\beta_0(K,N)$ and the game continues as a $(N,\beta)$-absolute game, and say that $S$ is $N$-absolute winning on $K$ if it is $(N,\beta)$-absolute winning on $K$ for every $0<\beta<\beta_0(K,N)$. It is left to the reader to see that
\begin{lem}\label{NdiffuseLem}
If $K\subseteq \mathbb{R}$ is diffuse then for every $N\in\N$, $K$ is $N$-diffuse.
\end{lem}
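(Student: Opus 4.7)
The plan is to iterate the $\beta$-diffuse property $N$ times, shaving off one forbidden ball $B(x_k,\beta'\rho)$ at each step while shrinking the working ball by a controlled geometric factor. Since we are allowed any single parameter $\beta'>0$ in the conclusion, the only real task is to choose the shrinkage rate and the final $\beta'$ consistently.

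Assume $K$ is $\beta$-diffuse with constant $\rho_K$, and fix $N\in\mathbb{N}$. Set $\gamma=\beta/5$ and $\beta'=\gamma^{N}$; I will show that $K$ is $(N,\beta')$-diffuse with the same constant $\rho_K$. Given $x_0\in K$, $0<\rho<\rho_K$, and $x_1,\dots,x_N\in\mathbb{R}$, I will construct inductively points $z_0=x_0,z_1,\dots,z_N\in K$ and radii $r_k=\gamma^k\rho$ such that for every $k\in\{0,\dots,N\}$,
$$
B(z_k,r_k)\subseteq B(x_0,\rho)\quad\text{and}\quad B(z_k,r_k)\cap B(x_j,\beta'\rho)=\varnothing\ \text{for all } 1\le j\le k.
$$
Taking $y=z_N$ then yields a point of $K\cap B(x_0,\rho)$ avoiding $\bigcup_{k=1}^N B(x_k,\beta'\rho)$, which is exactly the $(N,\beta')$-diffuse property.

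At the inductive step $k\mapsto k+1$, the ball $B(z_k,r_k/2)$ is centered at a point of $K$ and has radius less than $\rho_K$, so the $\beta$-diffuse property of $K$ produces
$$z_{k+1}\in K\cap B(z_k,r_k/2)\qquad\text{with}\qquad |z_{k+1}-x_{k+1}|>\beta r_k/2.$$
Setting $r_{k+1}=\gamma r_k$, the containment $B(z_{k+1},r_{k+1})\subseteq B(z_k,r_k)$ holds because $|z_{k+1}-z_k|+r_{k+1}<r_k/2+\gamma r_k<r_k$ (since $\gamma<1/2$). The new disjointness $B(z_{k+1},r_{k+1})\cap B(x_{k+1},\beta'\rho)=\varnothing$ follows from $r_{k+1}+\beta'\rho\leq 2\gamma^{k+1}\rho<\beta r_k/2=(5/2)\gamma^{k+1}\rho$, using $\gamma=\beta/5$ and $\beta'\leq\gamma^{k+1}$. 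The older disjointnesses for $j\leq k$ are inherited from $B(z_{k+1},r_{k+1})\subseteq B(z_k,r_k)$.

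The construction terminates after $N$ steps and produces the required point $z_N$, proving the lemma. There is no conceptual obstacle; the only care required is in picking $\gamma$ small enough that the two inequalities (containment and disjointness) hold simultaneously at every step. Any $\gamma\in(0,\beta/4)$ would work just as well, and one could equivalently phrase the argument as an induction on $N$ with inductive hypothesis \emph{``$K$ is $(N,\gamma^{N})$-diffuse with constant $\rho_K$''}.
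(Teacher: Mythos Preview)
Your proof is correct. The paper itself does not supply an argument for this lemma; it simply states ``It is left to the reader to see that'' before Lemma~\ref{NdiffuseLem}, so there is no authors' proof to compare against. Your iterative construction---applying the $\beta$-diffuse property $N$ times, each time shrinking the working ball by a factor $\gamma=\beta/5$ and excising one of the forbidden balls---is exactly the natural approach the authors presumably had in mind, and all the numerical checks (containment $1/2+\gamma<1$, disjointness $2\gamma^{k+1}\rho<(5/2)\gamma^{k+1}\rho$) go through as written. One cosmetic point: the inequality $|z_{k+1}-z_k|+r_{k+1}<r_k/2+\gamma r_k$ should be $\leq$ rather than $<$ (closed balls), but the subsequent strict inequality $r_k/2+\gamma r_k<r_k$ still gives the containment, so nothing is lost.
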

\begin{lem}\label{multiple}
A set $S\subseteq X$ is $N$-absolute winning on $X$ if and only if $S$ is absolute winning on $X$.
\end{lem}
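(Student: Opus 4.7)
The lemma asserts a two-way equivalence between two game conditions. The direction \emph{absolute winning $\Rightarrow$ $N$-absolute winning} is essentially free: in the $(N,\beta)$-absolute game Alice is permitted to play the union of \emph{up to} $N$ neighborhoods, and Bob's radius constraint $r_n \geq \beta r_{n-1}$ is identical in both games. So Alice, given a $\beta$-absolute winning strategy on $X$ (which exists for every $\beta < \beta_0(X)$ and hence for every $\beta < \beta_0(X,N) \leq \beta_0(X)$), simply plays a single neighborhood each turn; the resulting play is a legal $1$-absolute play, so it wins.

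The nontrivial direction is \emph{$N$-absolute winning $\Rightarrow$ absolute winning}. The plan is to simulate one turn of the $(N,\beta)$-game by $N$ consecutive turns of the $\beta'$-game, with the parameters coupled by $\beta' = \beta^{1/N}$. Fix any $\beta' < \beta_0(X)$; to show $S$ is $\beta'$-absolute winning on $X$ it suffices to produce a winning strategy for Alice. Set $\beta = (\beta')^N$, shrinking $\beta$ further if necessary so that $\beta < \beta_0(X,N)$ (which is possible because $X$ is diffuse, hence $N$-diffuse by Lemma~\ref{NdiffuseLem}, so $\beta_0(X,N) > 0$); by hypothesis Alice then has an $(N,\beta)$-absolute winning strategy $\sigma$ on $X$.

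The simulation runs as follows. Bob begins with $B_0 = B(x_0,r_0)$. Alice consults $\sigma$ and obtains a response $A = \bigcup_{k=1}^{M} B(y_k,\rho_k)$ with $M \leq N$ and $\rho_k \leq \beta r_0$. She now plays these $M$ neighborhoods one at a time in the $\beta'$-absolute game; after her $k$-th move Bob's ball $\tilde B_k$ has radius $\tilde r_k \geq (\beta')^k r_0$. Her $(k+1)$-st move $B(y_{k+1},\rho_{k+1})$ is legal in the $\beta'$-game because
$$
\rho_{k+1} \leq \beta r_0 = (\beta')^N r_0 \leq (\beta')^{k+1} r_0 \leq \beta' \tilde r_k,
$$
using $k+1 \leq N$. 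After these $M$ rounds Bob's ball $\tilde B_M$ is contained in $B_0 \setminus A$ and has radius $\geq (\beta')^M r_0 \geq (\beta')^N r_0 = \beta r_0$, hence qualifies as a valid response to $A$ in the $(N,\beta)$-game. Iterating block-by-block realizes an entire $(N,\beta)$-play in which Alice follows $\sigma$; since $\sigma$ is winning, the intersection meets $S$, and this common nested intersection is the same as the $\beta'$-game intersection, so Alice wins the $\beta'$-absolute game.

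The main delicate point is exactly the radius bookkeeping: each of Alice's simulated moves must remain legal as Bob's radius shrinks over the $N$-step block, which forces the choice $\beta \leq (\beta')^N$. A small technical wrinkle is that when $\sigma$ returns $M < N$ neighborhoods the simulated block is shorter, but the same inequality $(\beta')^M r_0 \geq \beta r_0$ still certifies that $\tilde B_M$ is an admissible Bob-response in the $(N,\beta)$-game. I do not expect any deeper obstacle beyond this accounting.
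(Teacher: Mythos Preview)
Your proof is correct and follows essentially the same approach as the paper: simulate one move of the $(N,\beta)$-absolute game by a block of $N$ consecutive moves of the $\beta'$-absolute game with $\beta = (\beta')^{N}$, checking that each of Alice's individual neighborhoods remains a legal move as Bob's radius shrinks across the block. The paper writes this a little more tersely (feeding the subsequence $\{B_{nN}\}$ of Bob's balls into the $\beta^{N}$ $N$-absolute strategy and distributing the resulting $N$ neighborhoods over the next $N$ turns), while you spell out the radius bookkeeping and the $M<N$ case explicitly; these are cosmetic differences only.
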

\begin{proof}
Note that in Definition~\ref{multipleDefn}, Alice may also use less than $N$ neighborhoods. So a set which is absolute winning on $X$ is obviously $N$-absolute winning on $X$. Assume $S$ is $N$-absolute winning on $X$ and define a strategy for Alice. Let $0 < \beta < \beta_0$. Then $0 < \beta^N<\beta_0$, so there is a winning strategy for Alice in the $\beta^N$ $N$-absolute game. Let $B_n$ be the $n$'th ball Bob chose in the $\beta$-absolute game. Then, $\{B_{nN}\}_{n=0}^\infty$ is a legitimate sequence of balls in the $\beta^N$ $N$-absolute game. Let $\bigcup_{i=1}^NA_n(i)$ be the $n$'th choice of Alice using her winning strategy. Then, for every $n\in\N$ write $n=qN+r$ with $1\leq r\leq N$ and $q\in\NEFES$, and let Alice choose $A_n=A_q(r)$. We have,
$$\bigcap_{n=0}^\infty B_n\cap S = \bigcap_{n=0}^\infty B_{nN}\cap S \neq \emptyset.$$
So $S$ is winning for the $\beta$-absolute game on $X$.
\end{proof}
\noindent
Now we're going to use this Lemma in order to show that the arguments of \cite{JinpengAn} imply that $\BAij\cap\Theta$ is not only winning but is absolute winning.
\begin{thm}[cf. Jinpeng An \cite{JinpengAn}, Proposition 3.1]\label{jinpengLemma}
For any $R>8$, a closed interval $B\subseteq\Theta$ and a $\lfloor R\rfloor$-regular tree-like family $\t = \{\t_n\}_{n\in\NEFES}$ such that $\t_0=\{B\}$ and for every $I\in\t_n$, $|I|=|B|R^{-n}$, there exists a $(\lfloor R\rfloor-5)$-regular tree-like subfamily $\i$ such that
$$\bigcap_{n=0}^\infty\bigcup_{I\in\i_n}I\subseteq\BAij\cap\Theta.$$
\end{thm}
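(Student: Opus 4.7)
The plan is to construct the subfamily $\i = \bigcup_n \i_n$ recursively, in the spirit of Jinpeng An's argument \cite{JinpengAn}. Set $\i_0 = \{B\}$ and maintain the invariant that every $I \in \i_n$ is disjoint from $\Delta(L)$ for every line $L(A,B,C)$ satisfying (\ref{line1}) with $H(A,B) < R^{n-1}$. After a harmless initial rescaling so that $|B| \leq c_1$, the situation matches the framework of Section~\ref{mainTheorem}, so the intervals $\Delta(L)$ and the constants $c, c_1$ are defined exactly as there.

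For the inductive step, fix $J \in \i_n$. The new obstructions come from lines in $C(n)$, i.e., those with $R^{n-1} \leq H(A,B) < R^n$. The crucial claim is the \emph{refined} estimate
\[
\#\{I \in \t_{n+1}(J) : \exists\, L \in C(n),\ I \cap \Delta(L) \neq \varnothing\} \leq 5,
\]
replacing the bound $R^{\beta-\varepsilon}$ of Theorem~\ref{main_thm} by an absolute constant. Granting this claim, since $\t$ is $\lfloor R \rfloor$-regular with $\lfloor R \rfloor \geq 8 > 5$, we may select $\lfloor R \rfloor - 5$ children of $J$ that avoid all newly forbidden $\Delta(L)$'s; this selection defines $\i_{n+1}(J)$, and the union $\i_{n+1} = \bigcup_{J \in \i_n} \i_{n+1}(J)$ is a $(\lfloor R \rfloor-5)$-regular subfamily satisfying the invariant at level $n+1$. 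The intersection $\bigcap_n \bigcup_{I \in \i_n} I$ is then disjoint from every $\Delta(L)$ with $L$ of any finite height, hence contained in $\BAcij \cap \Theta \subseteq \BAij \cap \Theta$.

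The refined estimate is the technical heart, and I would follow An's strategy. By Proposition~\ref{thm3}, all lines in $C(n,\ell,\cdot)$ that meet $J$ pass through a single rational point $P = (p/q, r/q)$, so $C(n,\ell,\cdot,J) = C(n,\ell,\cdot,J,P)$. The geometric constraints from Propositions~\ref{F} and~\ref{5.4} then confine the $(A,B)$-coordinates of such concurrent lines to a thin triangular region in the $(A,B)$-plane, and the intervals $\Delta(L)$ for lines through $P$ have tightly controlled positions (they are forced to cluster near $P$) and comparable lengths as $L$ varies over each $C(n,\ell,k,J,P)$. A direct lattice-point count inside the triangle then caps the number of surviving lines by a universal constant, and the geometry of $\Delta(L)$ clustered near $P$ caps the number of children actually hit by a further universal factor, yielding $5$ after the case split that parallels Cases~1 and~2 in the proof of Theorem~\ref{thm4}, but carried out with the sharp constants of An's Proposition~3.1.

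The main obstacle is precisely upgrading the estimate of Theorem~\ref{main_thm} from $R^{\beta-\varepsilon}$ to an absolute constant: the arguments of Section~\ref{mainKey} produce bounds polynomial in $R$, whereas one needs $5$ uniformly. An's key insight is that concurrency at $P$ forces a rigid arithmetic configuration on the relevant lines, so the count of dangerous lines depends only on geometric invariants of the triangle $\F$ (area and the shape of $\F_\ell$) and not on $R$. Once the absolute bound $5$ is established, the passage to the $(\lfloor R \rfloor - 5)$-regular subtree and the verification of the tree-like properties in Definition~\ref{treeLikeDef} are routine bookkeeping.
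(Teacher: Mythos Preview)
The paper does not prove this theorem; it is quoted verbatim as An's Proposition~3.1 and used as a black box in the proof of Proposition~\ref{BAisHAW}. So there is no ``paper's own proof'' to compare against, and your proposal must stand on its own.

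Your high-level outline is correct: build $\i_n$ recursively, maintain the invariant (\ref{inhyp1}), and at each step discard at most five children of each $J\in\i_n$. The gap is in the justification of the key estimate
\[
\#\{I\in\t_{n+1}(J):\exists\,L\in C(n),\ I\cap\Delta(L)\neq\varnothing\}\leq 5.
\]
You propose to obtain this by running Propositions~\ref{thm3}--\ref{5.4} with ``the sharp constants of An's Proposition~3.1'', but that route does not close. First, Proposition~\ref{thm3} gives a common point $P=P_\ell$ for $C(n,\ell,\cdot,J)$ separately for each $\ell$; there is no single $P$ for all of $C(n)$, and summing over $0\le\ell\le \frac{nj}{\lambda(j+1)}$ already destroys any absolute bound. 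Second, even for fixed $\ell,k$, the lattice-point count you invoke is exactly the quantity $M$ analysed in Case~2 of the proof of Theorem~\ref{thm4}, where one only obtains $M\lesssim R^{4\varepsilon/(\beta ij)}$; nothing in that argument yields a universal constant. The entire BPV machinery of Section~\ref{mainKey} is designed for a \emph{multi-level} removal scheme (each $J\in\j_{n-\ell}$ deletes up to $R^{\beta-\varepsilon}$ descendants at level $n+1$), which is then salvaged by the ubiquity argument of Lemma~\ref{ubique}. Your one-step induction bypasses that structure and therefore cannot borrow its estimates.

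An's actual proof does not refine the BPV propositions; it replaces the $C(n,\ell,k)$ decomposition by a different organising principle for the dangerous lines, and the constant $5$ emerges from a direct geometric argument about how the intervals $\Delta(L)$ with $H(A,B)$ in a dyadic range can overlap a fixed $J$. If you want to supply a proof here rather than cite \cite{JinpengAn}, you need that new ingredient, not a sharpening of Section~\ref{mainKey}.
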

\begin{prop}\label{BAisHAW}
$\BAij\cap\Theta$ is absolute winning on $\R$.
\end{prop}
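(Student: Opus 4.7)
The plan is to combine Theorem~\ref{jinpengLemma} with Lemma~\ref{multiple}. By Lemma~\ref{multiple}, it suffices to exhibit a winning strategy for Alice in the $(5,\beta)$-absolute game with target $\BAij\cap\Theta$ for every sufficiently small $\beta>0$; the number $5$ is chosen to match the defect $\lfloor R\rfloor-(\lfloor R\rfloor-5)$ between the tree degrees appearing in Theorem~\ref{jinpengLemma}.

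Fix such a $\beta$ and pick an integer $R>\max\{8,\beta^{-1}\}$, so that Theorem~\ref{jinpengLemma} is applicable and so that the tree and the game shrink at compatible rates. Suppose Bob opens with a ball $B_0\subseteq\R$ (after identifying $\Theta$ with $\R$ via the $y$-coordinate, and reducing if necessary to the case $B_0\subseteq\Theta$ by a standard finite preamble). Alice sets $B:=B_0$ and invokes Theorem~\ref{jinpengLemma} to obtain a $\lfloor R\rfloor$-regular tree-like family $\t$ with $\t_0=\{B\}$ together with a $(\lfloor R\rfloor-5)$-regular subfamily $\i$ satisfying $\bigcap_n\bigcup_{I\in\i_n}I\subseteq\BAij\cap\Theta$. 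Because $R\in\N$, the $R$ children of each $\t$-node have combined length equal to the parent, and so they tile it (up to shared boundary points).

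Alice's strategy maintains the invariant that after round $n$ there is some $I_n\in\i_n$ with $B_n\subseteq I_n$ and $r_n\geq|I_n|/(2\beta R)$. At round $n+1$ she enumerates the at most five bad children $J_1,\dots,J_5\in\t_{n+1}\setminus\i_{n+1}$ of $I_n$ and picks $A_{n+1}$ as the union of five neighborhoods, one around each $J_k$ of radius $|J_k|/2=|I_n|/(2R)$. The required bound $|I_n|/(2R)\leq\beta r_n$ is exactly the invariant. After Bob plays $B_{n+1}\subseteq B_n\setminus A_{n+1}$, the ball $B_{n+1}$ avoids each bad child of $I_n$ in its entirety; by the tiling property together with the connectedness of $B_{n+1}$, it must lie inside a single good child $I_{n+1}\in\i_{n+1}$. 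The invariant propagates since $r_{n+1}\geq\beta r_n\geq|I_n|/(2R)=|I_{n+1}|/2\geq|I_{n+1}|/(2\beta R)$, using $\beta R\geq 1$. As $n\to\infty$, $|I_n|=|B|R^{-n}\to 0$ and the nested balls shrink to a single point in $\bigcap_n I_n\subseteq\bigcap_n\bigcup_{I\in\i_n}I\subseteq\BAij\cap\Theta$.

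The main obstacle is the tiling property used in the inductive step: if $R$ were not an integer, the good children of $I_n$ might be separated by gaps in $I_n$ not corresponding to any $\t$-node, which Alice would also have to block, potentially requiring more than five neighborhoods per round and breaking the match with Theorem~\ref{jinpengLemma}'s defect. Choosing $R\in\N$ circumvents this, and the quantitative bound $R\geq\beta^{-1}$ is exactly what makes Alice's neighborhoods conform to the game's size constraint. The initial reduction $B_0\subseteq\Theta$ is the remaining technical point and can be handled by a short preamble before the tree is set up.
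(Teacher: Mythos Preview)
Your inductive step contains a genuine gap. You assert that once Alice has deleted the at most five bad children of $I_n$, Bob's ball $B_{n+1}$ ``by the tiling property together with the connectedness of $B_{n+1}$ \ldots must lie inside a single good child $I_{n+1}\in\i_{n+1}$.'' Connectedness only forces $B_{n+1}$ into a maximal \emph{block} of consecutive good children, not into a single one. Worse, your own computation rules out containment in a single child except in a degenerate case: from $r_{n+1}\ge\beta r_n\ge |I_n|/(2R)=|I_{n+1}|/2$ you obtain $|B_{n+1}|=2r_{n+1}\ge |I_{n+1}|$, so $B_{n+1}\subseteq I_{n+1}$ would force $B_{n+1}=I_{n+1}$ exactly. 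Bob is never obliged to shrink his ball by more than the factor $\beta$, and the condition $R>\beta^{-1}$ only guarantees that Alice's deletions are legal; it gives no upper bound on $r_{n+1}$. Thus the invariant $B_n\subseteq I_n$ cannot be propagated and the strategy fails.

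This mismatch between the game's contraction rate and the tree's contraction rate is exactly what the paper's proof is designed to overcome. There one takes $R=\beta^{-2}$ (or $\beta^{-4}$), so that one tree level corresponds to roughly two game rounds; Alice then plays dummy moves and acts only at the first turn $s_n$ at which Bob's radius falls into the window $[\beta r_0R^{-n},\,r_0R^{-n}]$. At that moment $|B_{s_n}|\le 2r_0R^{-n}$ equals the length of a level-$n$ tree interval, so $B_{s_n}$ meets at most \emph{two} adjacent intervals $I_1,I_2\in\i_n$. Alice must therefore delete the bad children of both, together with the two rightmost leftover slivers (present because $R\notin\N$), which is why the argument needs $N=12$ rather than $N=5$. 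Your choice $R\approx\beta^{-1}$ puts you in the regime where the tree and the game shrink at the same rate, precisely the situation in which a single-interval invariant cannot survive.
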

\begin{proof}
Let Bob choose the ball $B_0=B\left(x_0,r_0\right)\subseteq\Theta$, and $0<\beta<\frac{1}{3}$.
Define $R=\frac{1}{\beta^2}$. Let $\t$ be the tree-like family of closed intervals that is generated by the recursive procedure of taking $\lfloor R\rfloor$ subintervals of length $\frac{1}{R}$ from the previous level, starting from the left side of each interval. Since $\beta<\frac{1}{3}$, $R>8$ and by Proposition~\ref{jinpengLemma} there exists a $\lfloor R\rfloor-5$ regular subtree $\i$. We use it to define a winning strategy for Alice for the $N$-absolute game with $N=12$. On her first turn, Alice chooses
\begin{equation}\label{base1}
A_1 = \bigcup_{I\in\t_1 \setminus\i_1}I\cup\left[x_0 - r_0 + 2\frac{\lfloor R\rfloor}{R}r_0,x_0 + r_0\right],
\end{equation}
which is a union of at most $6$ intervals. Note that by the definition of $\i$,
$$\left(B_0\setminus A_1\right)\cap\bigcup_{I\in\t_n\setminus\i_n}I=\varnothing.$$
In the following moves of Alice plays dummy moves by choosing the empty set, except for the turns $s_n$ in which Bob chooses for the first time a ball of radius $r$ that satisfies
\begin{equation}\label{rjn}
\frac{\beta r_0}{R^n} \leq r\leq \frac{r_0}{R^n}
\end{equation}
(If this doesn't happen, Alice continues playing dummy moves and wins because $\BAij\cap\Theta$ is dense). Assume that Alice chose $A_{s_{n-1}}$ such that $\left(B_{s_{n-1}-1}\setminus A_{s_{n-1}}\right)\cap\left(\bigcup_{I\in\t_n\setminus\i_n}I\right)=\varnothing$. This is true for $n=1$ by (\ref{base1}), where $s_0$ is defined to be $1$. By the RHS of (\ref{rjn}) there exist $I_1, I_2\in\t_n$ such that $B_{s_n}\subseteq I_1\cup I_2$. By the induction hypothesis $I_1,\;I_2$ are actually in $\i_n\subseteq\t_n$. By the construction of $\i$, both $I_1, I_2$ contain at most $5$ intervals that are not in $\i_{n+1}$. Taking into account also the rightmost subinterval of each of them, Alice chooses $A_{s_n+1}$ to be a union of at most $12$ intervals. Note that by the LHS of (\ref{rjn}) any $I\in\i_{n+1}$ satisfies
$$|I|=\frac{2r_0}{R^{n+1}}=\frac{2\beta^2 r_0}{R^{n}}\leq\beta|B|,$$
so Alice can indeed do it by the rules of our game. We still have to show that by doing so Alice does not lose the game by leaving Bob with no possible continuation. For that we show that there is a ball $B$ of radius $r\geq\frac{r_0}{R^{n+1}}$ such that $B\subseteq B_{{s_n}-1}\setminus A_{s_n}$. It is sufficient to show that $|B_{{s_n}-1}\setminus \tilde{A}_{s_n}|>0$, where $\tilde{A}_{s_n}$ is a $\frac{r_0}{R^n}$-neighborhood of $A_{s_n}$. Indeed, for $n=1$ there is nothing to prove since $R>8$ and Alice removed at most $6$ subintervals. For $n>1$ use (\ref{rjn}) to get
$$|B_{s_n}\setminus \tilde{A}_{s_n}|\geq2\left(\frac{\beta r_0}{R^n}-24\frac{r_0}{R^{n+1}}\right)=\frac{2r_0}{R^{n+1}}\left(\frac{1}{\beta}-24\right).$$
In case $\beta<\frac{1}{24}$ we are done. If $\beta\geq\frac{1}{24}$ we can set $R=\frac{1}{\beta^4}$ using the same reasoning with $\beta^3$. Since $\beta^3<\left(\frac{1}{3}\right)^3<\frac{1}{24}$ we will be done.
This defines a winning strategy for Alice in the absolute game with $N=12$, because
$$\bigcap_{n=0}^\infty B_n=\bigcap_{n=1}^\infty B_{s_n}\in\bigcap_{n=1}^\infty\bigcup_{I\in\i_n}I\subseteq\BAij\cap\Theta.$$
Therefore applying Lemma~\ref{multiple} we have proved that $\BAij\cap\Theta$ is absolute winning on $\R$.
\end{proof}
\begin{proof}[Proof of Theorem~\ref{iOurThm}]
For every $t\in\N$, $\BA\left(i_t,j_t\right)\cap\Theta$ is absolute winning on $\R$. By using the infinite intersection property of absolute winning sets Proposition~\ref{infiniteIntersection} we get that $\bigcap_{t\in\N}\BA\left(i_t,j_t\right)\cap\Theta$ is absolute winning on $\R$. Therefore by Proposition~\ref{abs},  $\bigcap_{t\in\N}\BA\left(i_t,j_t\right)\cap \C$ is absolute winning on $\C$. At last, by Proposition~\ref{absSchmidt}, $\bigcap_{t\in\N}\BA\left(i_t,j_t\right)\cap \C$ is winning on $\C$ and hence by the full dimension property Proposition~\ref{fullDimension} and the fact that $\C$ is the support of a measure satisfying a power law, we get
$$\dim\left(\bigcap_{t\in\N}\BA\left(i_t,j_t\right)\cap \C\right)=\dim(\C).$$
\end{proof}

\end{document}